\documentclass[a4paper,leqno]{amsart}
\usepackage[margin=30truemm]{geometry}
\usepackage{fancyhdr,appendix,graphicx,layout}
\usepackage{stix2}
\usepackage{amsmath,amsthm}
\usepackage{enumitem}

\usepackage[numbers]{natbib}
\usepackage{bbm}
\usepackage{bm}
\usepackage{tikz}
\usepackage{tikz-cd}
\usetikzlibrary{positioning,arrows.meta,decorations.markings}
\tikzset{
    cells={font=\everymath\expandafter{\the\everymath\displaystyle}},
}
\usepackage[pagewise]{lineno}

\AtBeginEnvironment{subappandices}{%
\chapter*{Appendix}
\addcontentsline{toc}{chapter}{Appendices}
\counterwithin{figure}{section}
\counterwithin{table}{section}
}

\renewcommand{\emph}[1]{\textcolor{blue}{\textit{#1}}}

\usepackage{hyperref}
\hypersetup{
    colorlinks=true,
    linkcolor=red,  
    filecolor=blue,  
    urlcolor=orange, 
    citecolor=purple   
}
\usepackage{cleveref}

\makeatletter
\newcommand\ReDeclareMathOperator[2]{%
    \begingroup \escapechar\m@ne\xdef\@gtempa{{\string#1}}\endgroup
    \expandafter\@ifundefined\@gtempa
        {\@latex@error{Command \string#1 undefined}\@ehc}
        \relax
    \let\@ifdefinable\@rc@ifdefinable
    \DeclareMathOperator#1{#2}}
\makeatother

\crefname{thm}{Theorem}{Theorems}
\crefname{dfn}{Definition}{Definitions}
\crefname{dfnprop}{Definition-Proposition}{Definition-Proposition}
\crefname{prop}{Proposition}{Propositions}
\crefname{lem}{Lemma}{Lemmas}
\crefname{cor}{Corollary}{Corollaries}
\crefname{clm}{Claim}{Claims}
\crefname{ass}{Assumption}{Assumption}
\crefname{cond}{Condition}{Condition}
\crefname{nota}{Notation}{NOtation}
\crefname{conj}{Conjecture}{Conjecture}
\crefname{fct}{Fact}{Facts}
\crefname{rmk}{Remark}{Remarks}
\crefname{eg}{Example}{Examples}
\crefname{que}{Question}{Questions}
\crefname{figure}{Figure}{Figures}
\crefname{table}{Table}{Tables}
\crefname{section}{Section}{Sections}
\crefname{subsection}{Subsection}{Subsections}
\crefname{appendix}{Appendix}{Appendices}
\crefname{equation}{}{}
\crefname{main}{Theorem}{Theorems}

\theoremstyle{definition}
\newtheorem{thm}{Theorem}[section]
\newtheorem{main}{Theorem}
\newtheorem{dfn}[thm]{Definition}

\newtheorem{prop}[thm]{Proposition}
\newtheorem{lem}[thm]{Lemma}
\newtheorem{cor}[thm]{Corollary}

\newtheorem{rmk}[thm]{Remark}
\newtheorem{eg}[thm]{Example}

\numberwithin{equation}{section}
\makeatletter
\let\c@equation\c@thm
\makeatother

\newcommand{\D}{\mathcal{D}}

\newcommand{\K}{\mathcal K}
\renewcommand{\L}{\mathcal L}

\newcommand{\T}{\mathcal T}

\newcommand{\RR}{\mathbf{R}}

\newcommand{\NN}{\mathbb N}

\newcommand{\ZZ}{\mathbb{Z}}

\newcommand{\xto}{\xrightarrow}
\newcommand{\LR}{\Leftrightarrow}
\newcommand{\To}{\Rightarrow}

\newcommand{\ts}[1]{\overline{#1}}

\DeclareMathOperator{\add}{add}

\ReDeclareMathOperator{\top}{top}

\DeclareMathOperator{\ind}{ind}

\DeclareMathOperator{\obj}{obj}

\let\mod\relax
\DeclareMathOperator{\mod}{mod}

\newcommand{\op}{\text{op}}

\DeclareMathOperator{\Hom}{Hom}

\DeclareMathOperator{\End}{End}

\DeclareMathOperator{\RHom}{\RR Hom}

\DeclareMathOperator{\REnd}{\RR End}

\DeclareMathOperator{\Tot}{\mathsf{Tot}}

\DeclareMathOperator{\thick}{thick}

\DeclareMathOperator{\Ext}{Ext}
\DeclareMathOperator{\silt}{silt}

\DeclareMathOperator{\cone}{cone}

\DeclareMathOperator{\per}{per}
\DeclareMathOperator{\pvd}{pvd}

\newcommand{\wt}{\widetilde}
\title{Silting-discrete graded path algebras}
\author{Riku Fushimi}
\date{\today}

\newcommand{\Addresses}{{
  \bigskip
  \footnotesize

  R. Fushimi, \textsc{Department of mathematics, Nagoya University,
  Chikusa-ku, Nagoya 464-8602, Japan}\par\nopagebreak
  \textit{E-mail address}: \texttt{fushimi.riku.h9@s.mail.nagoya-u.ac.jp}
}}

\begin{document}

\begin{abstract}
  We classify connected finite acyclic graded quivers $Q$ for which the
  graded path algebra $kQ$, regarded as a formal dg algebra, is
  silting-discrete.  We prove that $kQ$ is silting-discrete if and only if
  it is derived-discrete, and that both conditions are equivalent to the
  underlying graph of $Q$ being of type ADE, or of type $\wt{A}$ with
  unequal clockwise and counter-clockwise total degrees.  The key
  ingredient is an explicit construction of an infinite pre-simple-minded
  collection in $\pvd kQ$ in the non-discrete case.
\end{abstract}

\maketitle
\tableofcontents

\section{Introduction}\label{section:Intro}

Gabriel's theorem asserts that the path algebra $kQ$ of a connected finite
acyclic quiver $Q$ over a field $k$ admits only finitely many isomorphism
classes of indecomposable modules if and only if the underlying graph of
$Q$ is a Dynkin diagram of type ADE.  This celebrated result is one of the
foundational theorems of representation theory: it shows that a finiteness
condition on the module category is controlled entirely by the
combinatorial shape of the quiver.  Gabriel's proof is constructive: when
$Q$ is not Dynkin, an infinite family of pairwise non-isomorphic
indecomposable representations is exhibited explicitly.

Several weakenings of representation-finiteness have since been studied at
the level of derived categories.  Vossieck \cite{V01} introduced
\emph{derived-discrete} algebras: an algebra $\Lambda$ is
derived-discrete if for every cohomology dimension vector, $\D^b(\mod\Lambda)$
contains only finitely many isomorphism classes of objects with that
vector.  He classified all such algebras over algebraically closed fields:
a connected derived-discrete algebra is either piecewise hereditary of
Dynkin type, or a gentle one-cycle algebra satisfying a certain
combinatorial condition (the absence of a clock). 

Keller-Vossieck
\cite{KV88} introduced \emph{silting objects} as a generalization of
tilting objects, and Aihara-Iyama \cite{AI12} developed the mutation theory of silting objects.  An algebra $\Lambda$ is \emph{silting-discrete} if for every
$l\ge 0$, the set of silting objects lying between $\Lambda$ and
$\Sigma^l\Lambda$ in the silting partial order is finite. For an ordinary quiver $Q$,
\begin{align*}
    Q\text{ is Dynkin }\LR kQ\text{ is derived-discrete }\LR kQ\text{ is silting-discrete.}
\end{align*}

In this paper, we study silting-discreteness for \emph{graded} path
algebras.  A graded quiver $Q$ is a quiver in which each arrow carries an
integer degree, and the path algebra $kQ$ then inherits a grading by total
degree of paths; viewed with zero differential, $kQ$ is a formal dg
algebra.  This class strictly contains ordinary path algebras.  Our main result gives a
complete classification.

\begin{main}\label{main:thm}
  Let $Q$ be a connected finite acyclic graded quiver.  The following
  conditions are equivalent:
  \begin{itemize}
    \item[(1)] The underlying graph of $Q$ is of type ADE, or of type
      $\wt{A}$ with unequal clockwise and counter-clockwise total degrees;
    \item[(2)] $\#\ind\left(\pvd^{[-n,0]}kQ\right)<\infty$ for every $n\ge 0$;
    \item[(3)] $kQ$ is silting-discrete.
  \end{itemize}
\end{main}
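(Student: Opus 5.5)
I will prove the cycle of implications $(1)\To(2)\To(3)\To(1)$, with the middle step a general statement about formal dg algebras.

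\textbf{Step $(1)\To(2)$.} The first observation is that $\pvd kQ$, the perfectly valued derived category, depends only on the underlying graph and the degree data up to "regrading" at vertices. Concretely, changing all degrees by a vertex-potential $\phi\colon Q_0\to\ZZ$ (adding $\phi(t(a))-\phi(s(a))$ to $|a|$) gives a derived equivalent graded path algebra. This is realised by shifting the simple or projective summands $e_ikQ$ by $\Sigma^{\phi(i)}$.

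On a tree, every degree function is such a coboundary. So for ADE type, $kQ$ is derived equivalent to the ordinary path algebra $kQ'$ for some orientation. Moreover $\per kQ'=\D^b(\mod kQ')$ has finitely many indecomposables up to shift. Since the equivalence has bounded amplitude relative to the standard t-structures, each window $\pvd^{[-n,0]}$ meets only finitely many indecomposables.

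For $\wt A$, the regrading can be used to put all degree on a single arrow. The only invariant is then the winding degree $w=(\text{clockwise total})-(\text{counter-clockwise total})$. If $w\neq0$, $kQ$ should be derived equivalent to a gentle one-cycle algebra without clock, or its dg analogue. I would instead compute the AR quiver of $\pvd kQ$ directly, using that $kQ$ is hereditary in the graded sense, so every object of $\pvd$ is a sum of shifted graded modules. With $w\ne0$, the tubes and the $\mathbb P^1$-family of the ordinary $\wt A$ case get "spread out" over infinitely many degrees. Each window $[-n,0]$ then contains only finitely many indecomposables, and the components are of type $\ZZ A_\infty$.

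\textbf{Step $(2)\To(3)$.} Use the silting/t-structure correspondence (Koenig--Yang type, valid for proper non-positive... or here for finite-dimensional-cohomology dg algebras via simple-minded collections). Silting objects between $kQ$ and $\Sigma^l kQ$ correspond to simple-minded collections, or bounded t-structures, whose hearts lie in $\pvd^{[-l,0]}$ up to shift. Each such collection is a finite set of indecomposables in a finite set by (2), so there are only finitely many. Here I must take care that $kQ$ need not be non-positive. I would first apply the regrading to reduce to the case where the relevant correspondence holds, or use Su--Yang's version for general homologically smooth proper dg algebras.

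\textbf{Step $(3)\To(1)$, the main obstacle.} Suppose the graph is not ADE, or is $\wt A$ with $w=0$. I would construct an infinite pre-simple-minded collection in $\pvd kQ$, that is, infinitely many objects with pairwise Hom-orthogonality and no negative self-extensions, all living in a fixed window. Such a family prevents silting-discreteness, since silting-discreteness forces every pre-simple-minded collection in a bounded range to complete to one of finitely many simple-minded collections.

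For $\wt A$ with $w=0$, the regrading makes $Q$ ordinary. The quasi-simple modules at the mouths of homogeneous tubes, one for each $\lambda\in k^\times$, then form such a family.

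For non-ADE graphs, $Q$ contains either a cycle or a Euclidean subtree $\wt D,\wt E$. In the tree case I would regrade the subtree to degree $0$ and take the homogeneous quasi-simples $R_\lambda$ there. I would then push these along the fully faithful-enough functor induced by the idempotent subalgebra; here checking that the extra arrows do not create negative Homs needs care. In the cycle case with $w\neq0$ and extra vertices, I would use an attached branch to obtain a $\wt D$-like configuration, or use band-type objects with a parameter.

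Verifying the orthogonality and negative-Ext vanishing uniformly across these cases is where I expect the real work to lie.
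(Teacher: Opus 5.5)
Your regrading by vertex potentials, the use of Kalck--Yang and Hara--Wemyss, and the ``ordinary'' part of $(3)\Rightarrow(1)$ are sound. For non-ADE trees and for $\wt{A}$ with zero winding, and more generally whenever some full subquiver regrades to an ordinary Euclidean quiver, homogeneous quasi-simples extended by zero give an infinite pre-simple-minded collection. No extra care is needed there, because graded $\Hom$ and $\Ext^1$ between modules supported on a full subquiver only see its arrows. Your $(1)\Rightarrow(2)\Rightarrow(3)$ is also fine. For $\wt A$ with nonzero winding, though, the precise reason is not ``spread-out tubes''. With a single negative arrow, $\mod^{[-n,0]}kQ\simeq\mod k\wt{Q}^{[-n,0]}$, and $\wt{Q}^{[-n,0]}$ is a finite piece of the $\ZZ$-cover of the cycle, hence a disjoint union of type $A$ quivers.

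The genuine gap is the remaining case of $(3)\Rightarrow(1)$: $Q$ contains a cycle of nonzero winding, is not of type $\wt A$, and no full subquiver regrades to an ordinary Euclidean quiver. This is the real content of the theorem, and ``a $\wt D$-like configuration or band-type objects'' is a placeholder rather than an argument. The minimal instances are the families (a)--(c) of \cref{prop:piece}:
\begin{itemize}
\item a double arrow $1\to2$ of degrees $0,-n$ followed by $2\to3$;
\item two such graded Kronecker pieces with a common target;
\item three or more parallel arrows of degrees $0,-m,-n,\dots$.
\end{itemize}
All proper full subquivers of these satisfy condition (1), so extension by zero gives nothing. The generalized Kronecker quiver has no branch and is not gentle, so neither of your suggestions applies. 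The $\lambda$-family of $(1,1)$-bricks for the ordinary $n$-Kronecker quiver also disappears once the arrows carry pairwise different degrees.

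The natural candidates are regular bricks on a Euclidean piece of the covering $\wt{Q}^{[-n,0]}$, but these are orthogonal only in $\mod^\ZZ kQ$. By \cref{lem:i.e.}, $\Hom_{\pvd kQ}(L,\Sigma^iL')\simeq\Hom_{kQ}(L,L')^i\oplus\Ext^1_{kQ}(L,L')^{i-1}$. So you must also kill every negative-degree morphism and every negative-degree extension within the family. The specific modules of \cref{section:psmc} are engineered for exactly this. They are checked by hand, with \cref{lem:trivial} handling the $\Ext$ part by comparing projective resolutions, and Koszul duality handling Kronecker quivers with arbitrary degrees.

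You are also missing a reduction from an arbitrary $Q$ to finitely many base shapes. Restriction to full subquivers is not enough: a triangle of nonzero winding with one pendant vertex already has only proper full subquivers satisfying (1). The paper's reduction goes as follows.
\begin{itemize}
\item Normalize $Q$ to be non-positively graded with $Q^0$ a connected ADE tree.
\item Choose a negative arrow $\alpha$ minimizing $d_{Q^0}(s(\alpha),t(\alpha))$, and then maximizing its multiplicity.
\item Shrink the walk $W$ by vertex deletion, vertex contraction $Q\mapsto Q(i)$ and sink/source mutation until (a), (b) or (c) appears.
\end{itemize}
Your idempotent-subalgebra remark could serve as the transport mechanism for contraction, as an alternative to the argument of \cref{lem:reduce}. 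Since $\per kQ=\pvd kQ$, the functor $-\otimes^{\mathbf{L}}_{eAe}eA$ is fully faithful and carries pre-simple-minded collections to pre-simple-minded collections. You still need contraction and mutation as operations, the explicit base-case families, and a base change to an infinite field (\cref{lem:base-change}) so that the $k^*$-indexed families are actually infinite.
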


The principal difficulty lies in proving $(3)\To(1)$.  By a
field-extension argument (\cref{lem:base-change}), we may assume that $k$
is uncountable.  We prove the contrapositive: if condition~(1) fails, we
construct a pre-simple-minded collection $\{L_\lambda\}_{\lambda\in k^*}
\subseteq\pvd kQ$ indexed by $k^*$, and apply a result of Hara-Wemyss
\cite{HW26}, which bounds the cardinality of any pre-simple-minded
collection in $\pvd kQ$ by $\#Q_0$ whenever $kQ$ is silting-discrete.
The existence of such a collection in the non-discrete case is established
in \cref{section:psmc}, where three explicit families of graded quivers
(\cref{prop:piece}) are treated directly; successive applications of vertex
deletion, vertex contraction, and sink/source mutation
(\cref{subsec:reduction}) reduce the general case to these three families.

\medskip\noindent
\textbf{Organization.}
\Cref{section:Pre} collects the prerequisites.
\Cref{subsec:silting} recalls silting theory and introduces
pre-simple-minded collections.
\Cref{subsec:graded} reviews graded algebras and their relation to formal
dg algebras.
\Cref{subsec:reduction} introduces the three reduction operations on
graded quivers and verifies that each preserves silting-discreteness.
\Cref{section:main} proves \cref{main:thm}, deferring the explicit
construction of pre-simple-minded collections to \cref{section:psmc}.

\medskip\noindent
\textbf{Conventions and notation.}
We fix a field $k$. All modules are right modules.  All subcategories are
full.
For a graded quiver $Q$, we write $Q_0$ for the set of vertices and $Q_1$
for the set of arrows, and use $s(\alpha)$, $t(\alpha)$, $\deg(\alpha)$
for the source, target, and degree of an arrow $\alpha$.

\medskip\noindent
\textbf{Acknowledgements.}
The author would like to express his sincere gratitude to his supervisor
Akira Ishii for his continuous support and encouragement.  The author also
thanks Osamu Iyama for suggesting the problem of classifying
silting-discrete graded path algebras.

\section{Preliminaries}\label{section:Pre}

\subsection{Silting theory}\label{subsec:silting}

Let $\T$ be a Hom-finite Krull-Schmidt triangulated category with shift functor $\Sigma$.

\begin{dfn}[\cite{KV88,AI12}]
  An object $M\in\T$ is a \emph{silting object} if
  \begin{itemize}
    \item $\Hom_\T(M,\Sigma^{>0}M)=0$, and
    \item $\thick M = \T$.
  \end{itemize}
  Two silting objects are \emph{equivalent} if they have the same
  additive closure; we write $\silt\T$ for the set of equivalence classes.
\end{dfn}

The set $\silt\T$ carries a natural partial order: $M\ge N$ if
$\Hom_\T(M,\Sigma^{>0}N)=0$ \cite{AI12}.  \emph{Silting mutation}
\cite{AI12} replaces one indecomposable summand of a basic silting object
via an approximation triangle to produce a new basic silting object; two
basic silting objects are related by a mutation if and only if they are
adjacent in the Hasse quiver of $(\silt\T,\ge)$.

\begin{dfn}
  Let $T_0\in\silt\T$.  A silting object $T\in\silt\T$ is
  \emph{reachable from $T_0$} if there is a finite sequence of silting
  mutations connecting $T_0$ to $T$.  We say $\T$ is
  \emph{silting-discrete} if for every $l\ge 0$ the
  set
  \[
    \bigl\{T\in\silt\T \;\big|\; T_0 \ge T \ge \Sigma^l T_0\bigr\}
  \]
  is finite. Silting-discrete does not depend on the choice of $T_0$.

\end{dfn}

\begin{dfn}
    Let $A$ be a dg algebra.
    \begin{itemize}
        \item $\per(A):=\thick_{\D(A)}A_A$ is called the \emph{perfect derived category} of $A$.
        \item $\pvd(A):=\{X\in\D(A)\mid \sum_{i\in\ZZ}\dim H^i(X)<\infty\}$ is called the \emph{perfectly-valued derived category} of $A$.
    \end{itemize}
\end{dfn}
For the definition of dg algebras and their derived categories, we refer the reader to \cite{K94}. For a subset $I\subseteq\ZZ$, we put
\begin{align*}
    \pvd^I(A):=\{X\in\pvd(A)\mid H^j(X)=0\text{ if }j\notin I\}.
\end{align*}

In this paper, we consider finite-dimensional graded path algebras $kQ$, and in this case $\per kQ=\pvd kQ$. For a dg algebra $A$, we say  $A$ is \emph{silting-discrete} if $\per A$ is silting-discrete.

We use pre-simple-minded collections to prove that certain graded path
algebras are not silting-discrete.

\begin{dfn}
  A family $\L=\{L_i\}_{i\in I}\subseteq\T$ is a
  \emph{pre-simple-minded collection} if
  \begin{itemize}
    \item $\Hom_\T(\L,\Sigma^{<0}\L)=0$, and
    \item $\L$ is a \emph{semibrick}: for every $i,j\in I$,
      \[
        \Hom_\T(L_i,L_j)=
        \begin{cases}
          \text{division ring} & \text{if }i=j,\\
          0                   & \text{if }i\ne j.
        \end{cases}
      \]
  \end{itemize}
  A pre-simple-minded collection $\L$ is called \emph{simple-minded collection} if $\thick\L=\T$.
\end{dfn}

The relevance to silting-discreteness comes from the following result.

\begin{lem}\label{lem:HW}
    Let $A$ be a connective dg algebra with finite-dimensional cohomologies. If $A$ is silting-discrete, then any pre-simple-minded collection can be completed to a simple-minded collection. In particular, every pre-simple-minded collection is finite.
\end{lem}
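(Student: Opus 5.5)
The proof will go through Koenig–Yang type bijections together with the reduction of pre-simple-minded collections to simple-minded collections. Let $\T=\per A$ with $A$ connective and $H^*(A)$ finite-dimensional, so that $\pvd A\subseteq\per A$ is where simple-minded collections live. The strategy is to realize a given pre-simple-minded collection $\L$ as the set of simples of some bounded t-structure (or of some intermediate heart), using silting-discreteness to control the relevant torsion-theoretic data.

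First I will handle finiteness. Since $\L$ is a semibrick with $\Hom(\L,\Sigma^{<0}\L)=0$, the extension closure $\mathcal{F}(\L)$ of $\L$ is an abelian length category inside $\pvd A$ whose simple objects are exactly the $L_i$. Applying cohomology with respect to the standard t-structure of $\pvd A$, each $L_i$ lies in $\pvd^{[-n,0]}A$ up to shift. I will first reduce to finitely many cohomological degrees by choosing a suitable shift.

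Next I will pass to a silting object. The plan is to consider the smallest co-aisle, or torsion pair, generated by $\L$, namely $\mathcal{U}=\operatorname{susp}\L$. In the Koenig–Yang correspondence, silting objects correspond bijectively to bounded t-structures with length hearts, which in turn correspond to simple-minded collections. For silting-discrete $A$ (in the sense of Aihara–Mizuno and Pauksztello–Saorín–Zvonareva) every bounded t-structure in $\pvd A$ sandwiched between two shifts of the standard one has length heart and comes from a silting object. I would construct the t-structure $(\T^{\le0},\T^{\ge0})$ whose aisle is generated by the standard aisle intersected appropriately with $\L$. Concretely, I take the intermediate torsion pair in the standard heart (after restricting to two-term pieces) generated by $\L$ and HRS-tilt it. Silting-discreteness guarantees that all intermediate t-structures arise from silting objects via the Koenig–Yang bijection, so the resulting heart is a length category. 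Its simples then include the $L_i$, or the $L_i$ are shown to be simple via the semibrick and negative-vanishing conditions. The simples of this heart form a simple-minded collection containing $\L$. Since a simple-minded collection has exactly $|A|$ elements (the number of indecomposable summands of a basic silting object), $\L$ is finite.

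The main obstacle is the middle step: showing that the t-structure generated by $\L$ is bounded with a length heart in which each $L_i$ is simple. For a pre-simple-minded collection spread over several degrees this requires an induction on the width, peeling off one degree at a time through mutations of simple-minded collections. This is exactly the content of Hara–Wemyss \cite{HW26}, so in the paper I would cite it, noting that their hypotheses (connective dg algebra with finite-dimensional total cohomology) match ours, and use the above only as a sketch.
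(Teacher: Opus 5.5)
Your argument ends where the paper's does, with an appeal to \cite{HW26}, but you misstate what that reference gives. The paper says that Hara--Wemyss prove the statement for finite-dimensional algebras. The only content of the paper's proof beyond the citation is the assertion that their proof carries over to connective dg algebras with finite-dimensional cohomology. Writing that their hypotheses already match ours skips exactly this step. You would need at least to indicate why the ingredients of their argument remain valid in the dg setting. In particular this includes the Koenig--Yang type correspondences between silting objects in $\per A$ and simple-minded collections (equivalently bounded t-structures with length heart) in $\pvd A$, which your sketch also relies on.

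The sketch would not stand on its own either.
\begin{itemize}
\item The inclusion is the wrong way round. Since $H^*(A)$ is finite-dimensional, $A\in\pvd A$, hence $\per A\subseteq\pvd A$. Equality holds only in special situations such as the graded path algebras of this paper. Silting objects live in $\per A$ and simple-minded collections in $\pvd A$.
\item An infinite collection cannot be shifted into a single window $\pvd^{[-n,0]}A$. The correct reduction is to prove completability for finite subcollections. Their size is then bounded by the number of simple $H^0(A)$-modules (the rank of $K_0$), which gives finiteness.
\item Most importantly, one HRS tilt of the standard heart produces a heart whose objects have cohomology in two consecutive degrees. It therefore cannot contain collections such as those of \cref{prop:case-a}, whose members have cohomology in four degrees.
\item Even for a semibrick lying inside one heart, the fact that its members become simple in the tilted heart is a genuine theorem. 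It is Asai's correspondence between semibricks and two-term simple-minded collections, which uses $\tau$-tilting finiteness. It does not follow from the semibrick and negative-vanishing conditions alone.
\end{itemize}
The induction on width that you defer to \cite{HW26} is the whole content of the lemma. So what you have is a citation with an inaccurate scope, not an independent proof.
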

\begin{proof}
    This claim is proved in \cite{HW26} for finite-dimensional algebras, and their proof carries over to the general case.
\end{proof}

\begin{cor}\label{cor:not-sd}
  Assume $k$ is uncountable.  If $\pvd A$ contains a pre-simple-minded
  collection $\{L_\lambda\}_{\lambda\in k^*}$ indexed by $k^*$, then $A$
  is not silting-discrete.
\end{cor}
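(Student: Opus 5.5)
This is a direct contrapositive of \cref{lem:HW}. Suppose, for contradiction, that $A$ is silting-discrete. Here $A$ is understood, as in \cref{lem:HW}, to be a connective dg algebra with finite-dimensional cohomologies; this is the setting in which the corollary is applied, for instance $A=kQ$ with $Q$ a finite acyclic graded quiver.

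By \cref{lem:HW}, every pre-simple-minded collection in $\pvd A$ can be completed to a simple-minded collection. In particular, every pre-simple-minded collection is finite. The given family $\{L_\lambda\}_{\lambda\in k^*}$ is such a collection. Since $k$ is uncountable, $k^*=k\setminus\{0\}$ is infinite. The family is also genuinely infinite, not merely indexed by an infinite set with repetitions. Indeed, the semibrick condition forces $\Hom(L_\lambda,L_\mu)=0$ for $\lambda\ne\mu$, while $\Hom(L_\lambda,L_\lambda)$ is a division ring and hence nonzero. So $L_\lambda\not\cong L_\mu$ whenever $\lambda\ne\mu$, and the $L_\lambda$ are pairwise non-isomorphic and nonzero. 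This contradicts the finiteness guaranteed by \cref{lem:HW}.

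If one wants the finiteness bound explicitly rather than quoting the ``in particular'' clause, argue as follows. Complete the collection to a simple-minded collection $\L'\supseteq\{L_\lambda\}$. Silting-discreteness makes $\per A$ correspond to finitely many simple-minded collections, each of cardinality equal to the rank of $K_0(\per A)$, which is finite. Hence $\L'$, and so the subfamily, is finite. This is the only point requiring care, and it is exactly the content of \cref{lem:HW}.

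Uncountability of $k$ is stronger than needed for this corollary: it only needs $k^*$ to be infinite. Uncountability is presumably required later, in constructing the family.
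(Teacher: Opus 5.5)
Your proof is correct and is the argument the paper intends: the paper gives no separate proof, since the corollary is the contrapositive of the finiteness clause of \cref{lem:HW}. Your point that the semibrick condition makes the $L_\lambda$ pairwise non-isomorphic, so that only the infinitude of $k^*$ is used, agrees with the paper's own phrasing ``if $\#k$ is infinite'' in \cref{prop:piece}. One correction to your optional paragraph: silting-discreteness does not give finitely many simple-minded collections, since the shifts $\Sigma^i\L$ of any one already give infinitely many. That claim is also unnecessary; the finiteness you need comes only from every simple-minded collection having cardinality equal to the finite rank of $K_0$.
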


\subsection{Graded algebras and formal dg algebras}%
\label{subsec:graded}

\begin{dfn}
  Let $A=\bigoplus_{i\in\ZZ}A^i$ be a finite-dimensional graded algebra.
  We denote by $\mod^\ZZ A$ the category of graded $A$-modules.  For
  $M,N\in\mod^\ZZ A$, we write
  \[
    \Hom_A^\ZZ(M,N)
      \;:=\; \Hom_{\mod^\ZZ A}(M,N)
      \;=\; \Hom_A(M,N)^0.
  \]
  We define $M(i)\in\mod^\ZZ A$ by $M(i)^j:=M^{i+j}$.
\end{dfn}

In the rest of this paper, we mainly consider negatively graded algebras.
If $A$ is negatively graded, then for every $M\in\mod^\ZZ A$ the subspace
$M^{\le i}:=\bigoplus_{n\le i}M^n$ is a graded submodule, and we write
$M^{\ge i}$ for the quotient $M/M^{\le i-1}$.  For a subset $I\subseteq\ZZ$
we put
\begin{align*}
  \mod^I A \;:=\; \{M\in\mod^\ZZ A\mid M^j=0\text{ for every }j\notin I\}.
\end{align*}
For a graded quiver $Q$, the path algebra $kQ$ has a natural grading by total degree of paths, and every graded algebra may be regarded as a formal dg algebra (i.e.\ a dg algebra with zero differential).  The following theorem of Kalck-Yang identifies the perfectly-valued derived category of $kQ$, viewed as a formal dg algebra, with an orbit category of the bounded derived category of graded modules.

\begin{thm}[\cite{KY18}, Theorem~1.3]\label{thm:base}
  Let $Q$ be an acyclic graded quiver.  The functor taking total complexes
  induces an equivalence
  \begin{align*}
    \Tot\colon
    \D^b(\mod^\ZZ kQ)\,/\,\Sigma(-1)
    \;\overset{\sim}{\longrightarrow}\;
    \pvd kQ.
  \end{align*}
  In particular, it induces a bijection $\ind(\mod^{[-n,0]}kQ)\simeq\ind(\pvd^{[-n,0]}kQ)$.
\end{thm}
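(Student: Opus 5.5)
The plan is to build $\Tot$ explicitly, show it induces a fully faithful triangle functor on the orbit category, and then get essential surjectivity from the fact that $\pvd kQ=\per kQ$ is generated by $kQ$.

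\textbf{Step 1: construction and compatibility with $\Sigma(-1)$.} Put $A=kQ$, viewed as a formal dg algebra. A bounded complex $M^\bullet$ of graded $A$-modules is a bigraded space $M^{p,q}$, with $p$ the complex degree and $q$ the internal degree. The complex differential has bidegree $(1,0)$, and multiplication by a homogeneous $a\in A^m$ has bidegree $(0,m)$. Set
\[
  \Tot(M)^n=\bigoplus_{p+q=n}M^{p,q},
\]
with the complex differential as differential, and twist the $A$-action by the Koszul sign $(-1)^{pm}$ so that the Leibniz rule holds. Then $\Tot$ is an exact dg functor, sends acyclic complexes to acyclic dg modules, and so descends to a triangle functor $\D^b(\mod^\ZZ A)\to\D(A)$. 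Reindexing gives a natural isomorphism $\Tot(\Sigma M(-1))\cong\Tot M$, so $\Tot$ factors through the orbit category $\D^b(\mod^\ZZ A)/\Sigma(-1)$. We have $H^n(\Tot M)=\bigoplus_{p+q=n}H^p(M)^q$, which is finite-dimensional, so $\Tot$ lands in $\pvd A$.

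\textbf{Step 2: full faithfulness.} We must show that for all $M,N$ the map
\[
  \bigoplus_{n\in\ZZ}\Hom_{\D^b(\mod^\ZZ A)}\bigl(M,\Sigma^nN(-n)\bigr)\longrightarrow\Hom_{\D(A)}(\Tot M,\Tot N)
\]
is bijective. First we check that the left side is a finite sum. Because $Q$ is acyclic, graded $A$-modules are representations of the covering quiver $\wt Q$ with vertices $Q_0\times\ZZ$. This is a locally finite acyclic ordinary quiver, so $\mod^\ZZ A$ is hereditary, and both the complex degrees and internal degrees of $M,N$ are bounded; hence only finitely many $n$ contribute. Both sides are cohomological in each variable, so by the five lemma it suffices to check the generators $M=P_i(a)$ and $N=P_j(b)$, the graded indecomposable projectives. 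There $\Tot P_i(a)$ is a shift of $e_iA$, and the right side equals $H^m(e_jAe_i)=(e_jAe_i)^m$ for the appropriate $m$. On the left, only the term with $\Sigma^n$ landing in complex degree $0$ survives, and it equals the graded Hom $(e_jAe_i)^{b-a+n}$ at the matching $n$. Bookkeeping of the degrees then shows the two sides agree.

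\textbf{Step 3: density, the expected main obstacle.} The essential image contains $A=\Tot P$ and is closed under shifts and direct summands. Since $A$ is finite-dimensional, $\pvd A=\per A=\thick A$, so it remains to show the image is closed under cones. A morphism in the orbit category is a finite sum of components $M\to\Sigma^nN(-n)$, so its cone is not obviously the image of a cone upstairs. I would first try Keller's theorem on triangulated orbit categories: $\mod^\ZZ A$ is hereditary, $F=\Sigma(-1)$ is a standard equivalence, and each indecomposable has only finitely many $F$-translates in any fixed heart. These hypotheses should make the orbit category triangulated with $\Tot$ a triangle functor, and then the image is closed under cones. If that citation does not apply cleanly, the fallback is to replace $N$ by the upstairs object $\bigoplus_n\Sigma^nN(-n)$. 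This is finite by Step 2, it is isomorphic to a sum of copies of $N$ in the orbit category, and after this replacement the morphism becomes a genuine morphism upstairs whose cone can be taken there.

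\textbf{Step 4: the bijection.} By heredity, every indecomposable of $\D^b(\mod^\ZZ A)$ is $\Sigma^pM$ for an indecomposable graded module $M$. Applying $F^{-p}$ moves it to complex degree $0$, so each $F$-orbit of indecomposables contains a unique graded module. Since $H^j(\Tot M)=M^j$, the object $\Tot M$ lies in $\pvd^{[-n,0]}A$ exactly when $M\in\mod^{[-n,0]}A$. Together with the equivalence, this gives $\ind(\mod^{[-n,0]}kQ)\simeq\ind(\pvd^{[-n,0]}kQ)$.
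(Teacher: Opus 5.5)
The paper gives no proof of this statement; it quotes it from Kalck--Yang. Your argument therefore has to stand on its own.

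\textbf{Steps~1, 2 and~4.} These are fine in outline, with one slip. For \textit{right} modules no Koszul twist is needed: $d_{kQ}=0$ and the complex differential is $kQ$-linear, so the untwisted action already satisfies the Leibniz rule. The twist by $(-1)^{pm}$ actually breaks it when $m$ is odd.

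\textbf{The gap is Step~3.} Neither of your routes shows that the essential image is closed under cones.

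\textit{Keller's theorem.} Even granting that Keller's hypotheses hold, his theorem only gives \textit{some} triangulated structure on $\D^b(\mod^\ZZ kQ)/\Sigma(-1)$ for which the projection is exact. It says nothing about $\Tot$ carrying those triangles to triangles of $\pvd kQ$. That compatibility is an additional statement: it amounts to identifying Keller's triangulated hull with $\per kQ$ through a dg lift of $\Tot$. It does not follow from the hypotheses you list.

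\textit{The fallback.} This does not work. Suppose $f$ has components $f_n\colon M\to\Sigma^nN(-n)$ for $n\in S$, and put $g=(f_n)_n\colon M\to\bigoplus_{n\in S}\Sigma^nN(-n)$. Then $f=\nabla\circ\Tot(g)$, where $\nabla\colon(\Tot N)^{\oplus\#S}\to\Tot N$ is the codiagonal. So $\cone(\Tot g)$ lies in the image, but it is not $\cone(f)$. For example, take the automorphism $f=1_N+\epsilon$ with $0\neq\epsilon\in\Ext^1(N,N(-1))$. Then $\cone(f)=0$, whereas $g=(1,\epsilon)$ is a split monomorphism with $\cone(\Tot g)\cong\Tot N$. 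Getting from $\cone(\Tot g)$ to $\cone(f)$ by the octahedral axiom needs exactly the closure under cones you are trying to prove.

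\textbf{The missing idea.} Use heredity once more to get density directly, with no triangulated structure on the orbit category: every dg $kQ$-module $X$ is formal.
\begin{itemize}
\item Let $H=H^*(X)\in\mod^\ZZ kQ$. Since $\mod^\ZZ kQ$ is hereditary, there is a graded projective resolution $0\to P_1\xrightarrow{\delta}P_0\to H\to0$.
\item Send each free generator of $P_0$, lying in a shift of $e_ikQ$, to a cycle in $Xe_i$ representing its image in $H$. Because $d_{kQ}=0$, this defines a dg map $\phi\colon P_0\to X$.
\item For each free generator $y$ of $P_1$, $\phi\delta(y)$ is a cycle with zero class. So $\phi\delta(y)=d(z_y)$ for some $z_y$, again chosen in the appropriate $Xe_j$.
\item The assignment $y\mapsto z_y$ extends to a $kQ$-linear map $h$ of degree $-1$ with $dh=\phi\delta$.
\item Then $(\phi,h)\colon\cone(\delta)=\Tot(P_1\to P_0)\to X$ is a quasi-isomorphism. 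Indeed, $\delta$ is injective, so $H^*(\cone\delta)$ is the cokernel of $\delta$, namely $H$, and it maps identically onto $H^*(X)$.
\end{itemize}
Hence $X\cong\Tot(H)$ with $H$ placed in complex degree $0$. This gives essential surjectivity at once.

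It also gives the ``in particular'' part directly. Since $H^j(\Tot H)=H^j$, every object of $\pvd^{[-n,0]}kQ$ is $\Tot$ of a module in $\mod^{[-n,0]}kQ$. Your Step~2 gives $\End(\Tot M)\cong\End^\ZZ(M)\oplus\Ext^1(M,M(-1))$, with the second summand a square-zero ideal. This ring is local when $M$ is indecomposable, which yields the bijection on indecomposables.
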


\begin{lem}\label{lem:base-change}
  Let $k\subseteq l$ be an extension of fields.  The functor
  $(-)_l\colon\per kQ\to\per lQ$ induces a bijection between the
  isomorphism classes of reachable silting complexes.  In particular, $kQ$
  is silting-discrete if and only if so is $lQ$.
\end{lem}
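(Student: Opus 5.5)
The plan is to compare the silting theory of $\per kQ$ and $\per lQ$ through the base change functor $(-)_l = -\otimes_k l$ and show that it intertwines silting mutation. Write $A=kQ$ and $A_l=lQ=A\otimes_k l$. The functor $(-)_l\colon\per A\to\per A_l$ is triangulated and sends $A$ to $A_l$. Since $\Hom$-spaces are finite-dimensional and $A$ is a compact generator, we have
\[\Hom_{\per A_l}(X_l,\Sigma^i Y_l)\cong \Hom_{\per A}(X,\Sigma^i Y)\otimes_k l.\]
One checks this first for $X=A$ by adjunction and then by dévissage on $\thick A$. Consequences:
\begin{itemize}
\item $(-)_l$ is faithful and reflects vanishing of $\Hom$ in every degree.
\item $\thick$ is preserved, since $\thick A_l=\per A_l$ is generated by $A_l$.
\item Silting objects go to silting objects, and the partial order is preserved and reflected.
\end{itemize}

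The main step is compatibility with mutation. The path to $l$ is the key tool. Each vertex $e_i$ gives an indecomposable projective $e_iA$ with $\End$ in degree $0$ equal to $k$, because $Q$ is acyclic, hence $e_iA_l$ is indecomposable. More generally, I would show by induction along a mutation sequence that for every silting object $T$ reachable from $A$, each indecomposable summand $T_j$ satisfies $\End(T_j)/\operatorname{rad}\cong k$, so that $(T_j)_l$ stays indecomposable and nonisomorphic summands stay nonisomorphic. For the mutation step, a minimal left $\add(T/T_j)$-approximation $T_j\to U$ remains a left approximation after base change, by the $\Hom$ formula. It remains minimal because minimality is detected by the radical, and $\operatorname{rad}$ commutes with $\otimes_k l$ when all the tops are split $k$. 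Hence $\mu_j(T)_l\cong\mu_{j}(T_l)$, and the new summand again has $\End/\operatorname{rad}\cong k$. This is the step I expect to be the main obstacle: controlling that indecomposability and the split-top property persist, without assuming $k$ perfect. I would try to deduce the split-top property from the fact that mutation exchanges summands in a way compatible with $g$-vectors (classes in $K_0$), which are insensitive to the field.

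Given this, $T\mapsto T_l$ maps reachable silting objects of $\per A$ to reachable silting objects of $\per A_l$ and commutes with mutation at each index. Injectivity on isomorphism classes follows from faithfulness together with the $\Hom$ formula. If $T_l\cong T'_l$, then $\Hom(T,T')\otimes l$ contains an isomorphism, so $\Hom(T,T')$ contains one: the locus of non-isomorphisms is a proper Zariski-closed subset defined over $k$, and if $k$ is finite one replaces this argument by Krull–Schmidt with $\End/\operatorname{rad}=k$. Surjectivity follows by induction on the length of a mutation sequence from $A_l$. Every mutation of $T_l$ at a summand $(T_j)_l$ equals $(\mu_j T)_l$, and since the $(T_j)_l$ exhaust the indecomposable summands of $T_l$, every mutation is of this form.

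Finally, the ``in particular'' follows. The bijection preserves and reflects the order and sends $A$ and $\Sigma^l A$ to $A_l$ and $\Sigma^l A_l$, so the intervals $[\Sigma^n A,A]$ in the reachable parts correspond bijectively. Silting-discreteness is equivalent to finiteness of these intervals among reachable objects. Indeed, in a silting-discrete category all silting objects are reachable (Aihara), and conversely finiteness of the reachable intervals forces every silting object in the interval to be reachable, by Aihara's criterion: if every interval reached by iterated mutation is finite, then $\silt$ is connected. Hence $kQ$ is silting-discrete if and only if $lQ$ is.
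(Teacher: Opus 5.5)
\paragraph{Overall assessment.}
Your skeleton is the same as the paper's. The formula $\Hom_{\per lQ}(X_l,\Sigma^iY_l)\cong\Hom_{\per kQ}(X,\Sigma^iY)\otimes_k l$ shows that base change preserves silting objects, preserves and reflects the order, and preserves left approximations. Granted that the indecomposable summands of a reachable basic silting object stay indecomposable and pairwise non-isomorphic after base change, induction along mutation sequences gives the bijection. The ``in particular'' is then the reachable-interval criterion; the paper cites \cite[Theorem~1.2]{AM17} and \cite[Corollary~2.38]{AIR14}, which is what your last paragraph paraphrases.

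\paragraph{The gap.}
The one indispensable input is exactly what you leave open. You assert that every summand $T_j$ of a reachable $T$ has $\End(T_j)/\mathrm{rad}\cong k$, via an induction you do not carry out. For the inductive step you only express the hope of extracting this from $g$-vectors. Field-independence of classes in $K_0$ says nothing directly about the division ring $\End(T_j)/\mathrm{rad}$, and you give no mechanism connecting the two, so as formulated this is a hope rather than an argument. The paper does not prove the split-top property by hand either. It imports it from \cite[Remark~7.7]{KY14}: $\End(M)/\mathrm{rad}\cong k^n$ for every reachable basic silting $M=\bigoplus_{i=1}^nM_i$.

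\paragraph{How to close it with $K_0$.}
You can close the gap with $K_0$ if you aim at the right target. Split tops are not needed. Nor is minimality of approximations, since a non-minimal approximation yields a silting object with the same $\add$. What you need is only that each $(T_j)_l$ is indecomposable and $(T_j)_l\not\cong(T_{j'})_l$ for $j\neq j'$. This holds for every basic silting $T=\bigoplus_{j=1}^nT_j$, reachable or not:
\begin{itemize}
\item By \cite{AI12}, the $[T_j]$ form a $\ZZ$-basis of $K_0(\per kQ)$. Likewise, the classes $[U_1],\dots,[U_n]$ of the pairwise non-isomorphic indecomposable summands of the silting object $T_l$ form a $\ZZ$-basis of $K_0(\per lQ)$.
\item Base change is an isomorphism on $K_0$, because $e_ikQ\mapsto e_ilQ$.
\item For $j\neq j'$, the objects $(T_j)_l$ and $(T_{j'})_l$ share no indecomposable summand. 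A common summand would give a nonzero idempotent of $\End((T_j)_l)=\End(T_j)\otimes_k l$ factoring through $(T_{j'})_l$. Such an idempotent lies in $(\Hom(T_{j'},T_j)\circ\Hom(T_j,T_{j'}))\otimes_k l\subseteq\mathrm{rad}\End(T_j)\otimes_k l$, which is a nilpotent ideal.
\item Hence the matrix expressing the $[(T_j)_l]$ in the basis $[U_i]$ has nonnegative integer entries, exactly one nonzero entry in each row, a nonzero entry in each column, and determinant $\pm1$. It is therefore a permutation matrix, which is the claim.
\end{itemize}
With this in place the rest of your proposal goes through. Injectivity is also simpler than your Zariski argument: reflection of the order together with antisymmetry on $\silt$ already shows that $T_l\cong T'_l$ implies $\add T=\add T'$.
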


\begin{proof}
  By \cite[Remark~7.7]{KY14}, for every reachable basic silting object
  $M=\bigoplus_{i=1}^n M_i\in\per kQ$ we have
  $\top\End_{\per kQ}(M)\simeq k^n$, so $(M_i)_l$ is indecomposable.
  Since left approximations are preserved under $(-)_l$, the functor
  induces a bijection between the isomorphism classes of reachable silting
  complexes.  The last claim then follows from
  \cite[Theorem~1.2]{AM17} and \cite[Corollary~2.38]{AIR14}.
\end{proof}

\subsection{Reduction operations on graded quivers}\label{subsec:reduction}

\begin{dfn}[vertex contraction, vertex deletion]
  Let $i$ be a vertex of $Q$.  We define graded quivers $Q(i)$ and
  $Q(\hat i)$ as follows:
  \begin{itemize}
    \item $Q(i)_0=Q(\hat i)_0=Q_0\setminus\{i\}$;
    \item $Q(\hat i)_1
          =\{\alpha\in Q_1\mid s(\alpha)\ne i\ne t(\alpha)\}$
          with natural gradings;
    \item $Q(i)_1=Q(\hat i)_1\sqcup
          \{\alpha\beta\mid s(\alpha)=i=t(\beta)\}$
          with natural gradings.
  \end{itemize}
\end{dfn}

\begin{eg}
    Consider a quiver $Q=$\begin{tikzcd}
        & 2\ar[rd,"-1"] & \\ 1\ar[rr]\ar[ru] & & 3.
    \end{tikzcd}
    Then we can compute that $Q(2)=$\begin{tikzcd}
        1\ar[r,yshift=0.7ex,,"-1"]\ar[r,yshift=-0.7ex] & 2
    \end{tikzcd} and $Q(\hat 2)=$\begin{tikzcd}
        1\ar[r] & 2.
    \end{tikzcd}
\end{eg}

\begin{lem}\label{lem:reduce}
  Let $i\in Q_0$.  If $kQ$ is silting-discrete, then so are $kQ(\hat i)$
  and $kQ(i)$.
\end{lem}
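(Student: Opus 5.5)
Both reductions come from idempotents of $A:=kQ$. Let $e_i$ be the primitive idempotent at $i$ and $e:=1-e_i$. Then $kQ(\hat i)\simeq A/Ae_iA$. Because $Q$ is acyclic, $e_iAe_i=k e_i$, and one checks that $eAe\simeq kQ(i)$. Indeed, a path between vertices $\neq i$ either avoids $i$ or passes through $i$ exactly once. In the latter case it factors uniquely as a path into $i$, followed by an arrow $\alpha$ out of $i$, followed by a path. The arrows of $eAe$ are therefore the old arrows together with the composites $\alpha\beta$ with $s(\alpha)=i=t(\beta)$, and their degrees add up. No relations appear, so $eAe$ is the graded path algebra $kQ(i)$, and as a formal dg algebra it is again connective with finite-dimensional cohomology. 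The plan is to prove two general statements for a connective dg algebra $A$ with finite-dimensional cohomology and an idempotent $e$:
\begin{itemize}
\item silting-discreteness passes to $eAe$;
\item silting-discreteness passes to $A/AeA$, at least when $AeA$ is a stratifying (homological) ideal.
\end{itemize}

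For $A/Ae_iA$, the ideal $Ae_iA=Ae_i\otimes_k e_iA$ is projective, and the map $Ae_i\otimes^{\mathbf L}_{e_iAe_i}e_iA\to Ae_iA$ is an isomorphism because $e_iAe_i=k$. The ideal is therefore stratifying, and we get a recollement
\[
\D(k)\to\D(A)\to\D(A/Ae_iA).
\]
Restricted to perfect objects this gives the Verdier quotient $\per A/\thick(e_iA)\simeq\per(A/Ae_iA)$. The reduction is then the standard silting reduction of Iyama--Yang: $\per(A/Ae_iA)$ is the quotient of $\per A$ by the presilting object $P_i=e_iA$. Silting objects of the quotient lying between $B:=A/Ae_iA$ and $\Sigma^l B$ correspond bijectively to silting objects of $\per A$ that contain $P_i$ as a summand and lie in a corresponding interval. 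That interval can be taken between $A$ and $\Sigma^{l'}A$ for some $l'$, using Bongartz completion and the fact that $A\ge T\ge\Sigma^{l}A$ is preserved up to bounded shift. Finiteness for $A$ therefore gives finiteness for $A/Ae_iA$. An alternative is to use the characterisation of Aihara--Mizuno \cite{AM17}: silting-discrete means that every silting object is reachable from $A$ and that each $2$-silting interval is finite. Both properties are compatible with silting reduction, since $\tau$-tilting finiteness passes to the quotient by $P_i$.

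For $eAe$, I would use the functor $-\otimes^{\mathbf L}_A Ae\colon\per A\to\D(eAe)$. Here $eA e$-perfectness of $Ae$ is automatic, since $Ae=eAe\oplus e_iAe$ and $e_iAe$ is a finite sum of shifted $e_jAe$ by acyclicity. Alternatively, I could realise $kQ(i)$ as a quotient of a silting-discrete algebra in another way. The cleanest route I see is to reduce the contraction to the deletion case by a derived equivalence or mutation. Silting mutating $A$ at the summand $P_i$ gives $T=\mu_i(A)$, a silting object with $\per A$ unchanged, so $\End(T)$ is silting-discrete. One then wants $\End(T)/(\text{ideal at the new summand})\simeq eAe$, i.e. one wants the quotient by the mutated summand, modulo $\thick$ of it, to be $\per(eAe)$. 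Concretely, $\per A/\thick(\text{the cone } \mathrm{cone}(P_i\to\bigoplus P_j))$ is equivalent to $\per eAe$, because it kills exactly the relation identifying $P_i$ with its approximation. Applying the silting-reduction statement already proved in the previous paragraph to this presilting summand gives silting-discreteness of $eAe$.

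The main obstacle is this last identification: showing that the Verdier quotient of $\per A$ by the mutated summand is equivalent to $\per(eAe)$ rather than merely embedding into it. I would first try the dual recollement, $\per(eAe)\simeq\thick(eA)\subseteq\per A$, whose left orthogonal in $\per A$ is generated by the simple $S_i$ or its Koszul-dual perfect object. I would then check that the silting-reduction bijection applies to the right adjoint of this inclusion. This amounts to verifying that the intersection of $\silt$ intervals with $\thick(eA)$ stays finite, which is plausibly immediate because $eA$ is a summand of $A$ and $\Hom(eA,\Sigma^{>0}eA)=0$.
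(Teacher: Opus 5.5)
Your treatment of $kQ(\hat i)$ is correct and follows the paper's route: $\per kQ(\hat i)\simeq\per A/\thick(e_iA)$ with $e_iA$ presilting, followed by silting reduction. The paper cites \cite{AH24} for this inheritance.

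The contraction half, however, is not proved. Your main route needs $\per A/\thick(X)\simeq\per(eAe)$ for the new summand $X$ of a silting mutation of $A$ at $P_i$, and this fails for graded quivers. Mutation uses minimal $\add(eA)$-approximations. Morphisms in $\per A$ only see $H^0$ of the Hom-complexes, i.e.\ degree-$0$ paths, so arrows of nonzero degree at $i$ are invisible to the approximation. Take $Q\colon 1\xrightarrow{-1}2\xrightarrow{-1}3$ and $i=2$. There are no nonzero degree-$0$ morphisms between $P_2$ and $P_1\oplus P_3$ in either direction, so both mutations merely replace $P_2$ by $\Sigma^{\pm1}P_2$. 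The quotient is then $\per A/\thick(P_2)\simeq\per kQ(\hat 2)=\per(k\times k)$, whereas $Q(2)$ is the connected graded quiver $1\xrightarrow{-2}3$.

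Your fallback does not close the gap either. Silting objects of $\thick(eA)$ are not silting in $\per A$, so there is no ``intersection of silting intervals with $\thick(eA)$'' whose finiteness could be read off. What you need is an injective, order-compatible way of transporting silting objects of $\thick(eA)$ into $\per A$. That transfer is exactly what the paper imports from \cite{AH24}, applied to $\per kQ(i)\simeq\thick((1-e_i)kQ)$. It does not follow from $\Hom(eA,\Sigma^{>0}eA)=0$.

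You can repair this inside your own framework by reducing at $S_i$ rather than at a mutated summand.
\begin{enumerate}
\item Since $Q$ has no loop at $i$, $\Hom_{\per A}(S_i,\Sigma^nS_i)=\delta_{n0}k$, so $S_i$ is presilting.
\item Since $\Hom(eA,\Sigma^nM)=H^n(Me)$, the right orthogonal $\thick(eA)^{\perp}$ consists of the objects whose cohomology is supported at $i$, i.e.\ it is $\thick(S_i)$. In particular $\Hom(\thick(eA),\thick(S_i))=0$.
\item Your observation that $Ae\in\per(eAe)$ gives, for every $M\in\per A$, a triangle $Me\otimes^{\mathbf L}_{eAe}eA\to M\to M''$. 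Its first term lies in $\thick(eA)$, and $M''e=0$, so $M''\in\thick(S_i)$.
\item Hence $\per A=\thick(eA)\ast\thick(S_i)$ is a semi-orthogonal decomposition, and $\per kQ(i)\simeq\thick(eA)\simeq\per A/\thick(S_i)$.
\end{enumerate}
Your silting-reduction argument for $P_i$ now applies verbatim to the presilting object $S_i$.

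The object $S_i=\mathrm{cone}(\mathrm{rad}P_i\to P_i)$ is what your heuristic of killing the relation between $P_i$ and its approximation should have produced. It is not a mutation of $A$ in general, because $\mathrm{rad}P_i$ is a sum of shifted $P_j$'s: it lies in $\thick(eA)$ but not in $\add(eA)$. With this change, both halves become Verdier quotients by presilting objects.
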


\begin{proof}
  We have equivalences
  $\per kQ(i)\simeq\thick(1-e_i)kQ$ and
  $\per kQ(\hat i)\simeq\per kQ/\thick e_ikQ$.
  The claim follows from \cite[Theorem~2.10 and Lemma~2.12]{AH24}.
\end{proof}

\begin{dfn}\label{dfn:sink}
  A vertex $i\in Q_0$ is a \emph{sink} if
  \begin{itemize}
      \item there are no arrows with source $i$,
      \item all arrows with target $i$ carry the same degree.
  \end{itemize}
  \emph{Source} vertices are defined analogously.
\end{dfn}

\begin{rmk}
    The notion of sink in \cref{dfn:sink} is more restrictive than the usual one. For example, consider a graded quiver $Q=$ \begin{tikzcd}
            1\ar[r,"0"] & 2 & 3\ar[l,"-1"'].
        \end{tikzcd}
    The vertex $2$ is a sink in the usual sense but not in the sense of \cref{dfn:sink}, since not all arrows into $2$ carry the same degree.
\end{rmk}

Let $i\in Q_0$ be a sink vertex, and suppose every arrow $\alpha$ with
$t(\alpha)=i$ has the same degree $r$.  Then there is a quasi-isomorphism
\begin{align*}
  k(\mu_i Q)\;\simeq\;
  \REnd_{kQ}\!\left(
    \bigoplus_{j\ne i}e_j kQ
    \;\oplus\;
    \Sigma^{-1}\cone\!\left[
      \Sigma^{-r}\!\!\bigoplus_{t(\alpha)=i}\!\!e_{s(\alpha)}kQ
      \longrightarrow e_i kQ
    \right]
  \right),
\end{align*}
where $\mu_i Q$ is the graded quiver with $(\mu_i Q)_0=Q_0$ and
\begin{align*}
  (\mu_i Q)_1
    = \{\alpha\mid t(\alpha)\ne i\}
    \sqcup \{\alpha^*\mid t(\alpha)=i\},
\end{align*}
and $s(\alpha^*)=i$, $t(\alpha^*)=s(\alpha)$, $\deg(\alpha^*)=-r$.
We call $\mu_i Q$ the \emph{sink mutation} of $Q$ at $i$; \emph{source mutations}
are defined dually.

\begin{eg}
    Consider a quiver $Q=$ \begin{tikzcd}
        & 2 & \\
        1\ar[ru]\ar[rr,"-1"'] & & 3\ar[lu].
    \end{tikzcd}
    Then the mutation of $Q$ at the vertex $2$ is  $\mu_2Q=$\begin{tikzcd}
        & 2\ar[ld]\ar[rd] & \\
        1\ar[rr,"-1"'] & & 3.
    \end{tikzcd}
\end{eg}

\begin{lem}\label{lem:mutation}
  Let $i\in Q_0$ be a sink vertex.  Then $kQ$ is silting-discrete if and
  only if so is $k(\mu_i Q)$.
\end{lem}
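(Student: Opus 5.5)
The plan is to show that the object
\[
  T:=\bigoplus_{j\ne i}e_jkQ\;\oplus\;\Sigma^{-1}\cone\Bigl[\Sigma^{-r}\bigoplus_{t(\alpha)=i}e_{s(\alpha)}kQ\to e_ikQ\Bigr]
\]
is a silting object of $\per kQ$. Once this holds, the quasi-isomorphism $k(\mu_iQ)\simeq\REnd_{kQ}(T)$ gives a triangle equivalence $\per k(\mu_iQ)\simeq\thick T=\per kQ$, by standard derived Morita theory for dg algebras (Keller \cite{K94}). Silting-discreteness is a property of the triangulated category: by the definition it may be tested with any base silting object $T_0$, since it does not depend on the choice. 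So it transfers along this equivalence, and the ``if and only if'' follows at once. The equivalence sends $k(\mu_iQ)$ to $T$, so we simply take $T_0=T$ on one side and $T_0=kQ$ on the other.

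To see that $T$ is silting, identify it as an (irreducible) silting mutation of $kQ$ at the summand $e_ikQ$.

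First, since $i$ is a sink, there are no arrows out of $i$. Hence $e_ikQ=ke_i$ is the simple projective at $i$.

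Next, for $j\ne i$ consider the maps $\Sigma^{r'}e_jkQ\to e_ikQ$ for the various shifts $r'$. These are given by $\Hom_{kQ}(e_jkQ,e_ikQ)$ in the appropriate degree, which is $e_ikQe_j$. This space is spanned by paths from $i$ to $j$ (or from $j$ to $i$, depending on the convention), and is nonzero only via arrows between $i$ and $j$. Using the sink property, the relevant Hom space is spanned by the arrows $\alpha$ with $t(\alpha)=i$, all of which lie in the single degree $r$. This uniformity is exactly where the restrictive definition of sink is used: it ensures the map $\Sigma^{-r}\bigoplus_{t(\alpha)=i}e_{s(\alpha)}kQ\to e_ikQ$ is a minimal left (or right) $\add\bigl(\bigoplus_{j\ne i}e_jkQ\bigr)$-approximation concentrated in one shift.

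By the mutation theory of Aihara--Iyama \cite{AI12}, an approximation triangle of this form produces a silting object whenever the approximation is taken with respect to all shifts as needed. The $\Sigma^{-1}\cone$ is then precisely the mutated summand, so $T$ is silting. Concretely, $\thick T=\per kQ$ holds because $e_ikQ$ is recovered from the triangle. The vanishing $\Hom(T,\Sigma^{>0}T)$ can be checked directly, since $kQ$ is formal with all Homs between the $e_jkQ$ concentrated in degrees given by path degrees.

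The main obstacle is this vanishing check. Because the grading is arbitrary, $\Hom(e_jkQ,\Sigma^me_{j'}kQ)$ may be nonzero for positive $m$ even in $kQ$ itself. Strictly speaking, $kQ$ need not be silting if some arrows have positive degree; the paper works with negatively graded $kQ$, so it is. One must verify that the cone does not create new positive self-extensions. I would compute $\Hom(T,\Sigma^mT)$ using the long exact sequences of the defining triangle, together with the fact that $e_ikQ$ is simple projective. This should reduce everything to statements about degrees of arrows into $i$, which are all equal to $r$.

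If this direct approach is messy, an alternative bypasses the silting condition entirely. The tilting-type object $T$ is the image of $kQ$ under a spherical-twist-like autoequivalence only up to shift, but the quasi-isomorphism stated before the lemma already yields an equivalence $\per k(\mu_iQ)\simeq\thick T$. It then suffices to show $\thick T=\per kQ$, which is immediate from the triangle. This gives $\per kQ\simeq\per k(\mu_iQ)$ as triangulated categories, which is all that is needed.
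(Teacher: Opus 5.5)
Your last paragraph is the paper's proof, and it is correct. The paper's one-line argument is that $kQ$ and $k(\mu_iQ)$ are derived equivalent. This holds because $\thick T=\per kQ$: the source $\Sigma^{-r}\bigoplus_{t(\alpha)=i}e_{s(\alpha)}kQ$ lies in $\thick((1-e_i)kQ)$, so $e_ikQ\in\thick T$ by the triangle. Morita theory and the stated quasi-isomorphism then give $\per k(\mu_iQ)\simeq\per kQ$, and silting-discreteness is invariant under triangle equivalences. You should drop the silting route and keep only this.

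The primary route fails because $T$ is in general not a silting object. We have $\Hom(T,\Sigma^mT)=H^m\REnd_{kQ}(T)\cong k(\mu_iQ)^m$, and the new arrows $\alpha^*$ have degree $-r$. So whenever $r<0$, $\Hom(T,\Sigma^{-r}T)\neq 0$ with $-r>0$, and the cone does create positive self-extensions.

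A concrete example: take $Q=1\xrightarrow{\alpha}2$ with $\deg\alpha=-1$ and $i=2$. The triangle $\Sigma e_1kQ\to e_2kQ\to S_2\to\Sigma^2e_1kQ$ does not split, since $e_2kQ$ is indecomposable. Its connecting map therefore gives a nonzero element of $\Hom(\Sigma^{-1}S_2,\Sigma e_1kQ)\subseteq\Hom(T,\Sigma T)$.

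Relatedly, for $r\neq 0$ the map $\Sigma^{-r}\bigoplus e_{s(\alpha)}kQ\to e_ikQ$ is not an $\add((1-e_i)kQ)$-approximation, because its source consists of shifts of those summands. So $T$ is not an Aihara--Iyama mutation of $kQ$. That picture is valid only when $r=0$ and $Q$ is non-positively graded.

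Also, with the paper's conventions $e_ikQ$ is not simple projective. If it were, there would be no nonzero maps into it from $e_jkQ$ with $j\neq i$. The map above is the inclusion of its radical, its cone is $S_i$, and $T=(1-e_i)kQ\oplus\Sigma^{-1}S_i$.

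Finally, in your first paragraph you cannot ``take $T_0=T$'' in general. Instead, use that a triangle equivalence $F$ induces a poset isomorphism $\silt\per kQ\cong\silt\per k(\mu_iQ)$ commuting with $\Sigma$. It carries the interval between $T_0$ and $\Sigma^lT_0$ onto the interval between $F(T_0)$ and $\Sigma^lF(T_0)$, and independence of the base silting object then finishes the argument.
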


\begin{proof}
  The formal dg algebras $kQ$ and $k(\mu_i Q)$ are derived equivalent.
\end{proof}

\begin{rmk}
  It is clear that $kQ$ is silting-discrete if and only if so is $kQ^\op$.
\end{rmk}

\section{Main theorem}\label{section:main}

\begin{dfn}
  Let $n\ge 0$.  Define a quiver $\wt{Q}^{[-n,0]}$ by
  \begin{itemize}
    \item $\wt{Q}^{[-n,0]}_0
          =\{(i,l)\mid i\in Q_0,\;l\in[-n,0]\}$;
    \item $\wt{Q}^{[-n,0]}_1
          =\bigl\{(i,l)\xto{\alpha_l}(j,l+\deg\alpha)
            \;\big|\;
            i\xto{\alpha}j,\;
            \{l,l+\deg\alpha\}\subseteq[-n,0]\bigr\}$.
  \end{itemize}
\end{dfn}

\begin{eg}
  If $Q=$
  \begin{tikzcd}
    1\ar[r,yshift=0.7ex]\ar[r,yshift=-0.7ex,"-1"'] & 2\ar[r] & 3
  \end{tikzcd},
  then $\wt{Q}^{[-2,0]}=$
  \begin{tikzcd}[row sep=0.5em]
    (1,0)\ar[r]\ar[rd]  & (2,0)\ar[r]  & (3,0)  \\
    (1,-1)\ar[r]\ar[rd] & (2,-1)\ar[r] & (3,-1) \\
    (1,-2)\ar[r]        & (2,-2)\ar[r] & (3,-2)
  \end{tikzcd}.
  Since $\wt{Q}^{[-2,0]}$ is not Dynkin, $\#\ind\left(\mod k\wt{Q}^{[-2,0]}\right)=\infty$.
\end{eg}

\begin{rmk}\label{rmk:equiv}
    Assume that $Q$ is non-positively graded.
    The category $\mod^{[-n,0]}kQ$ has a progenerator $\bigoplus_{i=0}^n kQ(i)^{\ge-n}$ whose endomorphism ring is $k\wt{Q}^{[-n,0]}$, giving an equivalence
    \[
        \mod^{[-n,0]}kQ \;\simeq\; \mod k\wt{Q}^{[-n,0]}.
    \]
\end{rmk}

The proof of the following proposition is given in \cref{section:psmc}.

\begin{prop}\label{prop:piece}
  Suppose $Q$ has one of the following forms:
  \begin{itemize}
    \item[(a)]
      \begin{tikzcd}
        1\ar[r,yshift=0.7ex]\ar[r,yshift=-0.7ex,"-n"'] & 2\ar[r] & 3
      \end{tikzcd}
      \quad($n>0$);
    \item[(b)]
      \begin{tikzcd}
        1\ar[r,yshift=0.7ex]\ar[r,yshift=-0.7ex,"-n"'] & 2 &
        3\ar[l,yshift=-0.7ex,"-m"]\ar[l,yshift=0.7ex]
      \end{tikzcd}
      \quad($n,m>0$);
    \item[(c)] the $n$-Kronecker quiver \quad($n\ge 3$).
  \end{itemize}
  Then there is a pre-simple-minded collection
  $\{L_\lambda\}_{\lambda\in k^*}\subseteq\pvd kQ$.
  In particular, if $\#k$ is infinite, then $kQ$ is not silting-discrete.
\end{prop}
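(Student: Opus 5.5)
Our plan is to build the collection inside $\mod^{[-N,0]}kQ$ for suitable $N$, via \cref{rmk:equiv} and \cref{thm:base}, and then transport it to $\pvd kQ$ by $\Tot$. In all three cases $Q$ is non-positively graded, so for each $N$ we have $\mod^{[-N,0]}kQ\simeq\mod k\wt{Q}^{[-N,0]}$ with $\wt{Q}^{[-N,0]}$ an ordinary acyclic quiver. The idea is to find, for each $\lambda\in k^*$, a graded module $M_\lambda$ with three properties. First, $M_\lambda$ is a brick in $\mod^\ZZ kQ$. Second, $\Hom^\ZZ_{kQ}(M_\lambda,M_\mu)=0$ for $\lambda\neq\mu$. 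Third, the negative morphisms vanish after passing to the orbit category.

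To control the third property we use the orbit description $\Hom_{\pvd}(\Tot X,\Tot Y)=\bigoplus_{p}\Hom_{\D^b(\mod^\ZZ)}(X,\Sigma^p Y(-p))$. Since $kQ$ is hereditary as an ungraded path algebra, $\mod^\ZZ kQ$ is hereditary too. So the only contributions to $\Hom_{\pvd}(L_\lambda,\Sigma^{j}L_\mu)$ come from
\[
  \Hom^\ZZ(M_\lambda,M_\mu(s))\quad\text{and}\quad \Ext^{1,\ZZ}(M_\lambda,M_\mu(s))
\]
for suitable shifts $s$ depending on $j$. The conditions to check therefore reduce to two vanishings. First, $\Hom^\ZZ(M_\lambda,M_\mu(s))=0$ for all $s\neq0$, and also for $s=0$ when $\lambda\neq\mu$. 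Second, $\Ext^{1,\ZZ}(M_\lambda,M_\mu(s))=0$ for the range of $s$ corresponding to negative shifts. Here $\Ext^1$ placed in shift $\Sigma^{1}$ in the orbit contributes to degree $1-s$-type terms, and we must only kill those landing in negative total degree.

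For the choice of $M_\lambda$, we take in each case a one-parameter family of regular quasi-simple-like representations of $\wt{Q}^{[-N,0]}$.

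In case (c), $\wt{Q}^{[-0,0]}$ is already the $n$-Kronecker quiver with all arrows in degree $0$. The family is the classical one: take $M_\lambda=k^2$ at vertex $2$ and $k$ at vertex $1$, or dually. Because $n\ge3$, one can choose $\lambda$-dependent families of dimension vector $(1,1)$ with generic arrow maps, e.g.\ $(1,\lambda,\lambda^2,\dots)$. These modules are pairwise Hom-orthogonal bricks. Concentrated in degree $0$, the only negative morphisms would be $\Hom(M_\lambda,\Sigma^{-1}M_\mu)=0$, since everything is a module.

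In cases (a) and (b) the degree $-n$ arrow forces $N\ge n$. In $\wt{Q}^{[-n,0]}$ the two arrows out of $1$ land in different layers, and the resulting diagram contains a Euclidean subquiver. For case (a) this is a $\wt{D}$-shaped or $\wt{A}$ shape using the extra arrow $2\to3$; for case (b) it is a $\wt{A}$-cycle obtained by gluing the layers with the two degree-shifting arrows. We take $M_\lambda$ to be the homogeneous tube quasi-simple at parameter $\lambda$ for that Euclidean subquiver, extended by zero. By the standard theory these are pairwise orthogonal bricks with $\Ext^1(M_\lambda,M_\mu)=0$ for $\lambda\ne\mu$ and one-dimensional self-extensions.

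The main obstacle is the third property for the shifts $s\ne0$. We must show that $\Hom^\ZZ(M_\lambda,M_\mu(s))=0$ for $s\neq0$, and that the self-extension $\Ext^1(M_\lambda,M_\lambda)$ lands in non-negative orbit degree. Degree shifts move support between layers. We will argue by support that $M_\lambda$ and $M_\mu(s)$ either have disjoint layer support, or overlap only at a vertex where the relevant maps force vanishing. This is where the precise choice of layer window and the particular Euclidean subquiver must be made carefully, and we would verify it by direct computation with the explicit representations.

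Finally, $k^*$-indexing follows, and \cref{cor:not-sd} together with \cref{lem:base-change} (to pass to an uncountable extension when $k$ is infinite) gives the last claim.
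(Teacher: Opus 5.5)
Your reduction to graded Hom/Ext vanishing is the paper's \cref{lem:i.e.}, but you misstate what has to vanish, and that is exactly where your modules fail.

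\textbf{Cases (a) and (b).} Since $\Hom_{\pvd}(L_\lambda,L_\mu)\simeq\Hom^{\ZZ}_{kQ}(M_\lambda,M_\mu)\oplus\Ext^1_{kQ}(M_\lambda,M_\mu)^{-1}$, being a semibrick in $\mod^{\ZZ}kQ$ is not enough. You need $\Ext^1_{kQ}(M_\lambda,M_\mu)^{\le -1}=0$ for all $\lambda,\mu$. This includes degree $-1$, which lands in total degree $0$, not in negative total degree.

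Homogeneous-tube modules on a Euclidean full subquiver of the covering violate this condition:
\begin{itemize}
\item In (b) with $m=n=1$, let $\delta$ be the null root of the $\wt A_3$-cycle on $(1,0),(2,0),(3,0),(2,-1)$. For the Euler form of the covering quiver, $\langle\delta,\delta(1)\rangle=\langle\delta,\delta(-1)\rangle=-1$. Hence $\Ext^1_{kQ}(M_\lambda,M_\mu)^{-1}\neq0$ for all $\lambda,\mu$ under either sign convention, and $\Hom_{\pvd}(L_\lambda,L_\mu)\neq0$ for $\lambda\neq\mu$.
\item In (a), $\wt Q^{[-n,0]}$ is a disjoint union of a $D_6$ and $n-1$ copies of $A_3$, so it contains no Euclidean subquiver at all. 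The covering is a forest, so no $\wt A$ subquiver exists either. The $\wt D_6$ that appears in $\wt Q^{[-2,0]}$ for $n=1$ again gives $\langle\delta,\delta(\pm1)\rangle=-1$.
\end{itemize}
This failure is structural. For the null root of a Euclidean full subquiver $E$,
$\langle\delta,\delta(t)\rangle+\langle\delta,\delta(-t)\rangle=(\delta,\delta(t))=\sum_{v\notin E}\delta(t)_v(\delta,e_v)\le 0$,
and the inequality is strict once the shifted support meets the boundary of $E$.

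The missing idea is to choose an isotropic dimension vector $d$ with $\langle d,d(t)\rangle=0$ for every $t<0$. Then the vanishing of $\Hom_{kQ}(M_\lambda,M_\mu)^{<0}$, which is checked by hand, forces the vanishing of $\Ext^1_{kQ}(M_\lambda,M_\mu)^{<0}$. This is exactly \cref{lem:trivial}, applied via $\Hom(P_0,L_\mu)^{<0}\simeq\Hom(P_1,L_\mu)^{<0}$. Such vectors $d$ live on wild, non-Euclidean supports. For example, \cref{prop:case-b} with $m=n=1$ uses the $\wt A_3$ null root plus one extra copy of $k$ at vertex $1$ in degree $-1$; this gives $\langle d,d(-1)\rangle=0$ and $\langle d,d(1)\rangle=-2$. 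You deferred precisely this verification, and with your choice of modules it cannot succeed.

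\textbf{Case (c).} You have misread this case. Two things show that (c) concerns graded Kronecker quivers $\K_{0,-a_1,\dots,-a_k}$ with all $a_i>0$:
\begin{itemize}
\item the hypothesis $n\ge 3$, which would be pointless for ungraded Kronecker quivers since your family already works there at $n=2$;
\item the way (c) is used in the proof of \cref{thm:main}, where the quiver has one arrow of degree $0$ and $N\ge2$ arrows of negative degree.
\end{itemize}
In the graded case only the degree-$0$ arrow acts within a single degree. So $\mod^{[l,l]}kQ$ is of type $A_2$, and your family of modules concentrated in one degree does not exist. The paper instead needs three steps:
\begin{itemize}
\item modules spread over $a_1+a_2$ degrees (\cref{prop:3-Kro});
\item an extension of these to the case $a_1+a_2\ge a_3$ (\cref{lem:special});
\item a Koszul-duality reduction via $\RHom(-,S_1\oplus S_2)$ for the remaining cases (\cref{prop:nKro}).
\end{itemize}
None of this is covered by your argument.
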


\begin{thm}\label{thm:main}
  Let $Q$ be a connected acyclic graded quiver.  The following conditions
  are equivalent:
  \begin{itemize}
    \item[(1)] The underlying graph of $Q$ is of type ADE, or of type
      $\wt{A}$ with unequal clockwise and counter-clockwise total
      degrees;
    \item[(2)] $\#\ind\left(\pvd^{[-n,0]}kQ\right)<\infty$ for every $n\in\NN$;
    \item[(3)] $kQ$ is silting-discrete.
  \end{itemize}
\end{thm}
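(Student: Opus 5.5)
The plan is to prove $(1)\To(2)\To(3)\To(1)$; the last implication is the real content.

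\emph{$(1)\To(2)$.} First normalize $Q$. Reversing an arrow of degree $d$ is not allowed, but re-grading by a vertex potential is. For a function $c\colon Q_0\to\ZZ$, the shift $e_ikQ\mapsto \Sigma^{c(i)}e_ikQ$ changes $\deg\alpha$ into $\deg\alpha+c(t(\alpha))-c(s(\alpha))$, and the resulting formal dg algebra is derived equivalent to $kQ$. When the underlying graph is a tree, we can therefore reach degree $0$ everywhere. In the $\wt A$ case we can concentrate all the degree on a single arrow; the invariant is the difference $d$ between the clockwise and counter-clockwise total degrees, and by hypothesis $d\ne 0$. After a further shift we may also arrange that the remaining arrow has negative degree. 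These regradings, together with the shift $\Sigma^{c(i)}$, only change $\pvd^{[-n,0]}$ by a bounded window. So it suffices to bound $\#\ind\mod^{[-n-N,N]}kQ$ for the normalized $Q$. By \cref{thm:base} and \cref{rmk:equiv}, this is the number of indecomposables of $k\wt Q^{[-n',0]}$.

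For ADE with all degrees $0$, the quiver $\wt Q$ is a disjoint union of copies of $Q$. For $\wt A$ with one arrow of degree $-d<0$, the quiver $\wt Q$ is a finite "unrolled" cycle: a path-like quiver of type $A$ that never closes up. Both are Dynkin, so Gabriel's theorem gives finiteness.

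\emph{$(2)\To(3)$.} Use the bijection between silting objects and simple-minded collections, i.e.\ hearts of bounded t-structures (Koenig--Yang). Any silting $T$ with $kQ\ge T\ge\Sigma^l kQ$ corresponds to a simple-minded collection. Each member lies in $\pvd^{[-l,0]}kQ$, up to a fixed shift convention. The collection is determined by $\#Q_0$ indecomposables chosen from a finite set, so the number of such $T$ is finite. Here $\per kQ=\pvd kQ$ is used.

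\emph{$(3)\To(1)$.} By \cref{lem:base-change} we may assume $k$ is uncountable. Suppose (1) fails. We will show, using \cref{lem:reduce} and \cref{lem:mutation}, that $Q$ can be reduced to one of the quivers (a), (b), (c) of \cref{prop:piece}, or to an ungraded non-Dynkin quiver. In the ungraded case the classical result applies, or equivalently Gabriel's infinite family of bricks gives a pre-simple-minded collection. Then \cref{cor:not-sd} finishes.

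The reduction proceeds by cases.
\begin{itemize}
  \item If the underlying graph contains a cycle with a nonzero net degree plus extra structure, or two cycles, or a non-ADE tree, we delete vertices to reach a minimal non-discrete piece. Minimal pieces are: a $\wt A$-cycle with zero net degree, which regrades to an ungraded $\wt A$ quiver and so is not discrete; a $\wt D$ or $\wt E$ tree, which regrades to degree $0$; or a $\wt A$ cycle with nonzero net degree together with one extra vertex attached.
  \item In a cycle we contract vertices to shorten it. Contracting a vertex on a path composes arrows and keeps the net degree.
  \item Using sink/source mutations and regrading, we bring a cycle with a pendant vertex to form (a). We bring two cycles sharing a vertex, or a cycle with a multiple arrow, to form (b) or (c). Two arrows of degrees $0$ and $-n$ in a Kronecker pair arise after contracting a cycle to length $2$.
\end{itemize}

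The main obstacle is this combinatorial reduction. One must check that every non-(1) connected graded quiver admits deletions, contractions and (restricted) sink/source mutations landing exactly in (a), (b), (c) or an ungraded non-Dynkin quiver. The delicate point is that sinks in the sense of \cref{dfn:sink} require equal incoming degrees, so I would first regrade via vertex shifts to make the relevant sink admissible before mutating. I would organize the check by the first Betti number of the underlying graph (at least $2$, equal to $1$, or $0$), the ranks $0$ and $1$ being the main cases.
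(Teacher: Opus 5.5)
Your $(1)\Rightarrow(2)$ is the paper's argument, and you are more careful than the paper about why re-grading by a vertex potential only moves the window $[-n,0]$ by a bounded amount. Your $(2)\Rightarrow(3)$ via the silting/simple-minded-collection correspondence is valid, though heavier than the paper's direct remark: every summand of a $T$ with $kQ\ge T\ge\Sigma^l kQ$ lies in $\add kQ\ast\cdots\ast\add\Sigma^l kQ$, hence in a fixed $\pvd^{[-n,0]}kQ$.

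The gap is in $(3)\Rightarrow(1)$. The combinatorial reduction you defer as ``the main obstacle'' is the whole content of that implication beyond \cref{prop:piece}, and your sketch would not work as written.
\begin{itemize}
\item Your list of minimal pieces is incomplete. A minimal connected full subquiver violating (1) can also be a Kronecker quiver with at least three arrows. It can also be a shortest cycle of nonzero net degree plus one vertex joined to it by several edges: two triangles of nonzero net degree sharing an edge, or a graded Kronecker pair plus a vertex adjacent to both ends. These have first Betti number $2$ and admit no further useful deletion, so rank $\ge 2$ is not a side case. Your plan says nothing about a vertex joined to two distinct vertices of a cycle of length $\ge 3$.
\item More seriously, \cref{lem:reduce} transports silting-discreteness in one direction only, so a contraction at the wrong vertex lands in a discrete quiver and proves nothing. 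Take arrows $1\to2$, $3\to2$, $2\to4$ of degree $0$ and $1\xrightarrow{-1}3$. This $Q$ violates (1), but $Q(2)$ is a triangle on $\{1,3,4\}$ with unequal clockwise and counter-clockwise degrees, hence silting-discrete.
\item Re-grading never changes the degree difference of two parallel arrows. So once a Kronecker pair of degrees $0,-n$ appears, its target is never an admissible sink, contrary to your remark that sinks can always be made admissible by re-grading.
\end{itemize}
You need a procedure that provably avoids these traps and always ends in (a), (b) or (c).

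The paper gets this from a normal form.
\begin{itemize}
\item Re-grade so that $Q$ is non-positively graded with $Q^0$ connected. Silting-discreteness makes $kQ^0=H^0(kQ)$ $\tau$-tilting finite, so $Q^0$ is a spanning ADE tree. This also makes your separate treatment of non-ADE trees and zero-net-degree cycles unnecessary.
\item Choose a negative arrow $\alpha$ minimizing $n=d_{Q^0}(s(\alpha),t(\alpha))$ and, subject to that, maximizing the number $N$ of negative arrows parallel to $\alpha$.
\item Minimality forces the full subquiver on the tree-walk $W$ from $s(\alpha)$ to $t(\alpha)$ to be exactly $W$ plus these $N$ arrows. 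Any other negative arrow between vertices of $W$ would join vertices at tree distance $<n$, and one from $t(\alpha)$ to $s(\alpha)$ would create an oriented cycle.
\item For $N\ge 2$: after deleting the vertices off $W$, the interior vertices of $W$ carry only degree-$0$ arrows along $W$. They can be contracted, or mutated and then contracted, without side effects, yielding the $(N+1)$-Kronecker quiver.
\item For $N=1$: keep a single $Q^0$-neighbour $i_0$ of $W$. Minimality again restricts the negative arrows at $i_0$, leaving only the five shapes i)--v) to be reduced to \cref{prop:piece}.
\end{itemize}
Some such normal form, controlling where contractions and mutations are applied, is what your outline is missing.
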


\begin{proof}
  Since $Q$ is connected and acyclic, after a suitable degree shift we may
  assume that $Q$ is non-positively graded and that the degree-$0$ part
  $Q^0$ is connected.  Under this assumption, condition~(1) is equivalent
  to
  \begin{itemize}
    \item[(1)$'$] The underlying graph of $Q$ is of type ADE, or of type
      $\wt{A}$ with only one negative arrow.
  \end{itemize}

  \noindent
  $(1)'\To(2)$:
  The hypothesis implies that $\wt{Q}^{[-n,0]}$ is a disjoint union of Dynkin quivers for every $n$.  By \cref{rmk:equiv} and \cref{thm:base}, $\#\ind(\pvd^{[-n,0]}kQ)=\#\ind(\mod^{[-n,0]}kQ)=\#\ind(\mod k\wt{Q}^{[-n,0]})$, which is finite.

  \noindent
  $(2)\To(3)$: Immediate from the definitions.

  \noindent
  $(3)\To(1)'$:
  By \cref{lem:base-change} we may assume $k=\ts{k}$.  Since $kQ^0$ is
  $\tau$-tilting finite, $Q^0$ is of type ADE.  We assume
  \begin{itemize}[noitemsep]
    \item[$(*)$] $Q^{<0}\ne\emptyset$ and the underlying graph of $Q$ is
      not of type $\wt{A}_n$ for any $n$,
  \end{itemize}
  and derive a contradiction with silting-discreteness.

  Because $Q^0$ is a connected tree, any two vertices of $Q$ are connected
  by a unique undirected walk in $Q^0$; write $d_{Q^0}(i,j)$ for the
  length of this walk.  Choose $\alpha\in Q^{<0}$ such that
  \begin{itemize}[noitemsep]
    \item[(i)] $n:=d_{Q^0}(s(\alpha),t(\alpha))$ is minimal among all
      $\alpha'\in Q^{<0}$;
    \item[(ii)] subject to~(i), $N:=\#\{\beta\in Q^{<0}\mid
      s(\beta)=s(\alpha),\;t(\beta)=t(\alpha)\}$ is maximal.
  \end{itemize}
  Let $W\subseteq Q^0$ denote the unique walk from $s(\alpha)$ to
  $t(\alpha)$.  By \cref{lem:reduce} and \cref{lem:mutation}, the three
  operations
  \[
    Q\mapsto Q(\hat j),
    \qquad
    Q\mapsto Q(j),
    \qquad
    Q\mapsto\mu_j Q
  \]
  each preserve silting-discreteness when applicable.  We apply them
  successively to produce a quiver matching one of the forms (a), (b), (c)
  in \cref{prop:piece}, contradicting silting-discreteness.

  \medskip
  \textsc{Case }$N\ge 2$:
  Applying $Q\mapsto Q(\hat j)$ for every $j\notin W$, we may assume
  $Q^0$ is of type $A_{n+1}$ with endpoints $s(\alpha)$ and $t(\alpha)$.
  If $n=1$, then $Q$ is the $(N+1)$-Kronecker quiver, matching~(c).
  Assume $n>1$.  We iteratively reduce $n$ as follows.  While $n\ge 2$:
  \begin{itemize}
    \item If some vertex $i$ has both an incoming and an outgoing arrow,
      apply $Q\mapsto Q(i)$; this removes $i$ and decreases $n$ by $1$.
    \item Otherwise every interior vertex is a sink or a source.  Pick any
      such $i$, apply $Q\mapsto\mu_i Q$; the mutation flips the arrows at
      $i$, so its neighbours now have both incoming and outgoing arrows.
      Return to the first step.
  \end{itemize}
  The procedure terminates at $n=1$, giving the $(N+1)$-Kronecker quiver.

  \medskip
  \textsc{Case }$N=1$:
  Since the underlying graph of $Q$ is not of type $\wt{A}$, there exists
  a vertex $i_0\notin W$ adjacent to $W$.  Applying $Q\mapsto Q(\hat j)$
  to every $j\notin W\sqcup\{i_0\}$, we may assume $Q_0=W\sqcup\{i_0\}$.
  The possible shapes of $Q$ are exactly the following:
  \begin{itemize}
    \item[i)]
      \begin{tikzcd}
        \bullet\ar[r,bend right,"\alpha"',no head]\ar[r,dashed,no head]
        & \bullet\ar[r,no head] & \bullet
      \end{tikzcd}
    \item[ii)]
      \begin{tikzcd}
        & \bullet\ar[d,no head] & \\
        \bullet\ar[r,no head,dashed]
        \ar[rr,bend right=15,"\alpha"',no head]
        & \bullet\ar[r,dashed,no head] & \bullet
      \end{tikzcd}
    \item[iii)]
      \begin{tikzcd}
        & \bullet\ar[d,no head]\ar[rd,bend left=15,no head] & \\
        \bullet\ar[r,no head]
        \ar[rr,bend right=20,"\alpha"',no head]
        & \bullet\ar[r,no head,dashed] & \bullet
      \end{tikzcd}
    \item[iv)]
      \begin{tikzcd}
        \bullet\ar[r,no head,dashed]
        \ar[r,bend right,"\alpha"',no head]
        \ar[rr,bend left,equal]
        \ar[r,dashed,no head]
        & \bullet\ar[r,no head] & \bullet
      \end{tikzcd}
    \item[v)]
      \begin{tikzcd}
        \bullet\ar[r,no head]
        \ar[rr,bend right,"\alpha"',no head]
        & \bullet\ar[r,dashed,no head]
        \ar[rr,bend right,no head]
        & \bullet\ar[r,no head] & \bullet
      \end{tikzcd}
  \end{itemize}
  where solid edges are arrows of $Q$, dashed edges represent (unoriented) paths in $Q$, curved edges represent negative arrows, and curved double edges represent possibly multiple negative arrows.  The same argument as in
  the case $n=2$ reduces each of~i)--v) to one of~(a), (b), (c).
\end{proof}

We illustrate the reduction procedure with concrete examples.

\begin{eg}
    Consider a quiver $Q=$ \begin{tikzcd}
        & 3\ar[rrrd,bend left,"-1"] & & & \\
        1\ar[r]\ar[rrr,bend right,yshift=-0.7ex,"-1"',"\alpha"]\ar[rrr,bend right,yshift=0.7ex,"-1"] & 2\ar[u]\ar[r] & 4 & 5\ar[l]\ar[r] & 6.
    \end{tikzcd}
    \begin{itemize}
        \item $Q\mapsto Q(\hat 3)$;
        \begin{tikzcd}
            1\ar[r]\ar[rrr,bend right,yshift=-0.7ex,"-1"']\ar[rrr,bend right,"-1"] & 2\ar[r] & 4 & 5\ar[l]\ar[r] & 6.
        \end{tikzcd}
        \item $Q\mapsto Q(\hat 6)$;
        \begin{tikzcd}
            1\ar[r]\ar[rrr,bend right,yshift=-0.7ex,"-1"']\ar[rrr,bend right,"-1"] & 2\ar[r] & 4 & 5\ar[l].
        \end{tikzcd}
        \item $Q\mapsto Q(2)$;
        \begin{tikzcd}
            1\ar[r]\ar[rr,bend right,yshift=-0.7ex,"-1"']\ar[rr,bend right,"-1"] & 4 & 5\ar[l].
        \end{tikzcd}
        \item $Q\mapsto Q(5)$;
        \begin{tikzcd}
            1\ar[r]\ar[r]\ar[r,bend right,yshift=-0.7ex,"-1"']\ar[r,bend right,"-1"] & 5.
        \end{tikzcd}
    \end{itemize}
    The resulting quiver is $(c)$ in \cref{prop:piece}.
\end{eg}

\begin{eg}
    Consider a quiver $Q=$ \begin{tikzcd}
        & & & 4 & \\
        0\ar[rrru, bend left,"-1"] & 1\ar[l]\ar[rrr, bend right,"\alpha","-1"'] & 2\ar[l] & 3\ar[l]\ar[u]\ar[r] & 5 .
    \end{tikzcd}
    \begin{itemize}
        \item $Q\mapsto Q(\hat 0)$; \begin{tikzcd}
            & & 4 & \\
            1\ar[rrr, bend right,"-1"'] & 2\ar[l] & 3\ar[l]\ar[u]\ar[r] & 5 .
        \end{tikzcd}
        \item $Q\mapsto Q(1)$; \begin{tikzcd}
            & 4 & \\
            2\ar[rr, bend right,"-1"'] & 3\ar[l]\ar[u]\ar[r] & 5 .
        \end{tikzcd}
        \item $Q\mapsto Q(2)$; 
        \begin{tikzcd}
            4 & 3\ar[l]\ar[r, bend right,"-1"']\ar[r] & 5 .
        \end{tikzcd}
        \item $Q\mapsto\mu_4Q$;
        \begin{tikzcd}
            4\ar[r] & 3\ar[r, bend right,"-1"']\ar[r] & 5 .
        \end{tikzcd}
    \end{itemize}
    The resulting quiver is opposite to $(a)$ in \cref{prop:piece}.
\end{eg}

\begin{eg}
    Consider a quiver $Q=$ \begin{tikzcd}
        1\ar[r]\ar[rrrr,bend right, "-1"'] & 2\ar[r] & 3 & 4\ar[l]\ar[r] & 5 & 6\ar[l]\ar[llll,bend left,"-1"].
    \end{tikzcd}
    \begin{itemize}
        \item $Q\mapsto Q(2)$; 
        \begin{tikzcd}
            1\ar[r]\ar[rrr,bend right, "-1"'] & 3 & 4\ar[l]\ar[r] & 5 & 6\ar[l]\ar[lll,bend left,"-1"].
        \end{tikzcd}
        \item $Q\mapsto \mu_4Q$;
        \begin{tikzcd}
            1\ar[r]\ar[rrr,bend right, "-1"'] & 3\ar[r] & 4 & 5\ar[l] & 6\ar[l]\ar[lll,bend left,"-1"].
        \end{tikzcd}
        \item $Q\mapsto Q(3)$;
        \begin{tikzcd}
            1\ar[r]\ar[rr,bend right, "-1"'] & 4 & 5\ar[l] & 6\ar[l]\ar[ll,bend left,"-1"].
        \end{tikzcd}
        \item $Q\mapsto Q(5)$;
        \begin{tikzcd}
            1\ar[r]\ar[r,bend right, "-1"'] & 4 & 6\ar[l]\ar[l,bend left,"-1"].
        \end{tikzcd}
    \end{itemize}
    The resulting quiver is $(b)$ in \cref{prop:piece}.
\end{eg}

\section{Infinite pre-simple-minded collection}\label{section:psmc}

In this section, we prove \cref{prop:piece} by constructing the required pre-simple-minded collections explicitly. Throughout this section, we freely identify
$\obj(\pvd^{[-n,0]}kQ)$,
$\obj(\mod^{[-n,0]}kQ)$,
and $\obj(\mod k\wt{Q}^{[-n,0]})$
via the equivalences of \cref{thm:base} and \cref{rmk:equiv}.

\begin{lem}\label{lem:i.e.}
  Let $\L=\{L_i\}_{i\in I}\subseteq\mod kQ$.  The following are equivalent:
  \begin{itemize}
    \item[(1)] $\L$ is a pre-simple-minded collection in $\pvd kQ$;
    \item[(2)] $\L$ is a semibrick in $\mod kQ$, and
      $\Hom_{kQ}(L,L)^{<0}=0=\Ext^1_{kQ}(L,L)^{<0}$.
  \end{itemize}
\end{lem}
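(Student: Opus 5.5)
The plan is to compute every morphism space in $\pvd kQ$ between the $L_i$ through the orbit-category description of \cref{thm:base}, and then read off both conditions degree by degree. I regard each $L\in\L$ as a graded $kQ$-module (an object of $\mod^\ZZ kQ$), identified with $\Tot L\in\pvd kQ$. I read ``semibrick in $\mod kQ$'' as a condition on the degree-$0$ (graded) morphisms. The argument rests on three steps.

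\textbf{Step 1 (orbit-category Hom formula).} Since $\Tot$ is an equivalence from the orbit category $\D^b(\mod^\ZZ kQ)/\Sigma(-1)$, for graded modules $M,N$ and $p\in\ZZ$ I will write
\[
  \Hom_{\pvd kQ}(M,\Sigma^pN)\;\simeq\;\bigoplus_{q\in\ZZ}\Hom_{\D^b(\mod^\ZZ kQ)}\bigl(M,\Sigma^{p+q}N(-q)\bigr).
\]
The twist sign depends on the convention $M(i)^j=M^{i+j}$, and I will fix it carefully.

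\textbf{Step 2 (hereditarity).} Because $Q$ is acyclic, $\mod^\ZZ kQ$ is hereditary: graded projectives are the shifts $e_ikQ(j)$, and every graded module has a graded projective resolution of length at most one. Hence only the summands with $p+q\in\{0,1\}$ survive. Identifying $\Hom^\ZZ_{kQ}(M,N(j))=\Hom_{kQ}(M,N)^j$, and similarly $\Ext^{1}_{\mod^\ZZ kQ}(M,N(j))=\Ext^1_{kQ}(M,N)^j$, I expect
\[
  \Hom_{\pvd kQ}(M,\Sigma^pN)\;\simeq\;\Hom_{kQ}(M,N)^{p}\;\oplus\;\Ext^1_{kQ}(M,N)^{p-1}.
\]
Getting the exact offset on the $\Ext$ term, and whether it is $p-1$ or $p+1$ under the paper's conventions, is the step I expect to be the only real obstacle. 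It has to come out so that the conditions for $p\le 0$ assemble into exactly the statement. I will calibrate it on $Q$ a single arrow of degree $-1$ with simple modules.

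\textbf{Step 3 (comparison).} With this formula I compare the two conditions for $L,L'\in\L$.
\begin{itemize}
  \item For $p<0$, vanishing in $\pvd kQ$ means $\Hom_{kQ}(L,L')^{p}=0$ and $\Ext^1_{kQ}(L,L')^{p-1}=0$ for all $p<0$.
  \item For $p=0$, the $\pvd$-morphism space is $\Hom^\ZZ(L,L')\oplus\Ext^1_{kQ}(L,L')^{-1}$.
\end{itemize}
If this is a division ring for $L=L'$ and zero for $L\ne L'$, then the $\Ext^{-1}$ summand must vanish. It is a square-zero ideal in the endomorphism ring, since a composite of two such classes lands in $\Ext^2=0$. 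So the degree-$0$ morphisms form a semibrick, and together with $p<0$ this gives $\Hom^{<0}=0=\Ext^{1,<0}$, so (1) implies (2). Conversely, (2) kills all summands with $p<0$ and the $\Ext$ summand at $p=0$. The degree-$0$ morphism spaces then coincide with $\Hom^\ZZ$, which is a semibrick, so (2) implies (1).
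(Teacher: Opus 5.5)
Your proposal is correct and follows essentially the same route as the paper. It uses the Kalck--Yang orbit-category formula, collapses it by hereditarity to $\Hom_{\pvd kQ}(L,\Sigma^pL')\simeq\Hom_{kQ}(L,L')^p\oplus\Ext^1_{kQ}(L,L')^{p-1}$ (the offset $p-1$ is the right one), and then compares the two conditions degree by degree. Your square-zero-ideal argument, which kills $\Ext^1_{kQ}(L,L)^{-1}$ when the endomorphism ring is only assumed to be a division ring, is slightly more careful than the paper, which tacitly writes $\delta_{ij}k$ there.
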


\begin{proof}
  By \cref{thm:base} there are isomorphisms
  \[
    \Hom_{\pvd kQ}(L,\Sigma^i L)
    \;\simeq\;
    \bigoplus_{s+t=i}\Ext^s_{\mod kQ}(L,L(t))
    \;\simeq\;
    \Hom_{kQ}(L,L)^i \oplus \Ext^1_{kQ}(L,L)^{i-1},
  \]
  where the second isomorphism uses the fact that $kQ$ is hereditary.
  Hence
  \begin{itemize}
    \item $\Hom_{\pvd kQ}(L_i,L_j)=\delta_{ij}k$ iff
      $\Hom^\ZZ_{kQ}(L_i,L_j)=\delta_{ij}k$ and
      $\Ext^1_{kQ}(L_i,L_j)^{-1}=0$;
    \item $\Hom_{\pvd kQ}(L_i,\Sigma^{<0}L_j)=0$ iff
      $\Hom_{kQ}(L_i,L_j)^{<0}=0=\Ext^1_{kQ}(L_i,L_j)^{<-1}$.
      \qedhere
  \end{itemize}
\end{proof}

\begin{lem}\label{lem:trivial}
  Let $L_i,L_j\in\mod kQ$ and let $0\to P_1\to P_0\to L_i\to 0$ be a
  projective resolution of $L_i$.  If $\Hom_{kQ}(P_0,L_j)^{<0}\simeq
  \Hom_{kQ}(P_1,L_j)^{<0}$ as graded vector spaces, then
  $\Hom_{kQ}(L_i,L_j)^{<0}=0$ implies $\Ext^1_{kQ}(L_i,L_j)^{<0}=0$.
\end{lem}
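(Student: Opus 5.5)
Apply $\Hom_{kQ}(-,L_j)$, viewed as graded vector spaces, to the projective resolution $0\to P_1\to P_0\to L_i\to 0$. Since $kQ$ is hereditary and the resolution is a short exact sequence of graded modules, we get an exact sequence of graded vector spaces
\[
  0\to \Hom_{kQ}(L_i,L_j)\to \Hom_{kQ}(P_0,L_j)\xto{f} \Hom_{kQ}(P_1,L_j)\to \Ext^1_{kQ}(L_i,L_j)\to 0 .
\]
All maps are homogeneous of degree $0$, so the sequence stays exact after restricting to each degree $d$. Taking the direct sum over all $d<0$, we therefore obtain an exact sequence
\[
  0\to \Hom_{kQ}(L_i,L_j)^{<0}\to \Hom_{kQ}(P_0,L_j)^{<0}\xto{f^{<0}} \Hom_{kQ}(P_1,L_j)^{<0}\to \Ext^1_{kQ}(L_i,L_j)^{<0}\to 0 .
\]

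The argument is then a dimension count, degree by degree. Assume $\Hom_{kQ}(L_i,L_j)^{<0}=0$. Then $f^{d}$ is injective for each $d<0$. By hypothesis $\Hom_{kQ}(P_0,L_j)^{d}$ and $\Hom_{kQ}(P_1,L_j)^{d}$ have the same dimension, and this dimension is finite because $P_0,P_1,L_j$ are finite-dimensional. Hence the injective map $f^d$ is an isomorphism, so its cokernel $\Ext^1_{kQ}(L_i,L_j)^{d}$ vanishes. As this holds for every $d<0$, we conclude $\Ext^1_{kQ}(L_i,L_j)^{<0}=0$.

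The only point needing care is the meaning of ``isomorphic as graded vector spaces.'' It should give equality of dimensions in each degree, not merely equality of the total dimension of the $<0$ parts. If the hypothesis were only the total dimension, the same count still works: $f^{<0}$ is injective between finite-dimensional spaces of equal total dimension, hence bijective, so its cokernel vanishes. So the argument is robust either way. I expect no real obstacle beyond checking that the long exact sequence is compatible with the grading, which holds because the resolution consists of graded modules and degree-$0$ maps.
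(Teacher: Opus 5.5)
Your proof is correct. The paper states this lemma without proof, and your argument is exactly the routine verification it leaves implicit: apply $\Hom_{kQ}(-,L_j)$ to the graded resolution, restrict the resulting four-term exact sequence to each degree $d<0$, and note that the injective map $f^d$ between finite-dimensional spaces of equal dimension is bijective, so $\Ext^1_{kQ}(L_i,L_j)^d=0$.
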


We prove \cref{prop:piece}(a).  It suffices to treat the case $n=1$.

\begin{prop}\label{prop:case-a}
  Let $Q=\begin{tikzcd}
    1\ar[r,yshift=0.7ex]\ar[r,yshift=-0.7ex,"-1"'] & 2\ar[r] & 3
  \end{tikzcd}$,
  and set $L_\lambda=$
  \begin{tikzcd}[row sep=0.5cm,column sep=large]
    k\ar[r]\ar[rd,"{(1\ \lambda)^T}"near end] & k & \\
    k^2\ar[r]\ar[rd] & k^2\ar[r,"{(1\ 0)}"] & k \\
    k^2\ar[r]\ar[rd,"{(0\ 1)}"near end] & k^2\ar[r,"{(1\ {-1})}"] & k \\
    & k &
  \end{tikzcd}$\in\mod^{[-3,0]}kQ^\op$.
  Then $\{L_\lambda\}_{\lambda\in k^*}\subseteq\pvd kQ^\op$ is a
  pre-simple-minded collection.
\end{prop}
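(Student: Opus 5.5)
The plan is to apply \cref{lem:i.e.} to $Q^\op$. It then suffices to prove two things for all $\lambda,\mu\in k^*$. First, $\{L_\lambda\}$ is a semibrick in $\mod kQ^\op$, i.e.\ $\Hom^\ZZ_{kQ^\op}(L_\lambda,L_\mu)=\delta_{\lambda\mu}k$. Second, $\Hom_{kQ^\op}(L_\lambda,L_\mu)^{<0}=0=\Ext^1_{kQ^\op}(L_\lambda,L_\mu)^{<0}$. The degree-$(-1)$ part of $\Ext^1$ needed for the semibrick condition in $\pvd$ is already contained in the second condition, which is why this reformulation is enough.

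I will compute all Hom spaces concretely. I identify $L_\lambda$ with a representation of $\wt{Q^\op}^{[-3,0]}$ via \cref{rmk:equiv}, with rows indexed by degree $0,-1,-2,-3$. Then $\Hom_{kQ^\op}(L_\lambda,L_\mu)^d=\Hom^\ZZ(L_\lambda,L_\mu(d))$. For $d<0$ this is a morphism of representations from $L_\lambda$ to the copy of $L_\mu$ shifted down by $|d|$ rows. Since both modules live in $[-3,0]$, only the cases $d\in\{0,-1,-2,-3\}$ are nonzero a priori.

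In each case I write a morphism as a family of matrices at the vertices and impose commutativity with the arrows. The degree-$0$ arrows connect vertices in the same row, and the degree-$(-1)$ arrow connects adjacent rows. The rank and kernel conditions in the rows involving $(1\ 0)$ and $(1\ {-1})$ kill most entries. I then check the following.
\begin{itemize}
\item For $d<0$, the top row of $L_\lambda$ (the $k\to k$ with the map $(1\ \lambda)^T$ downward) is forced to vanish, and the vanishing propagates down the rows.
\item For $d=0$, every endomorphism is a scalar.
\item For $d=0$ and $\lambda\ne\mu$, the only obstruction is the condition that the scalar on the top vertex maps $(1\ \lambda)^T$ into the span of $(1\ \mu)^T$, which forces it to be zero.
\end{itemize}
This step, the $d=0$ semibrick check with $\lambda\neq\mu$ and the careful bookkeeping of the shifted compatibilities, is where I expect the real work.

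For $\Ext^1$ I use \cref{lem:trivial}. I write down the graded projective resolution $0\to P_1\to P_0\to L_\lambda\to0$ in $\mod^\ZZ kQ^\op$. Here $P_0$ is determined by the top of $L_\lambda$, and $P_1$ by the graded dimension vector. The graded dimensions of $\Hom(P_0,L_\mu)^{<0}$ and $\Hom(P_1,L_\mu)^{<0}$ depend only on the graded dimension vectors of $L_\lambda$ and $L_\mu$, which are independent of $\lambda$ and $\mu$. So the equality demanded by \cref{lem:trivial} is a finite dimension count, and it turns the Hom-vanishing above into $\Ext^1$-vanishing.

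If this count fails in some degree, I fall back on computing $\Ext^1(L_\lambda,L_\mu)^d$ directly as the cokernel of $\Hom(P_0,L_\mu)^d\to\Hom(P_1,L_\mu)^d$ in that degree. Equivalently, I can use the graded Euler form on $\wt{Q^\op}^{[-3,0]}$ combined with the Hom computations.
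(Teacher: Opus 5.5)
Your proposal is correct and follows the paper's proof. \cref{lem:i.e.} reduces the claim to graded Hom computations on the covering quiver: scalars in degree $0$, and vanishing in degrees $-1,-2,-3$. The $\Ext^1$-vanishing in negative degrees then follows from \cref{lem:trivial} via the explicit resolution with $P^\lambda_0=P_1\oplus P_1(1)^2\oplus P_1(2)^2$, for which both $\Hom$-spaces in negative degrees are $k(1)^6\oplus k(2)^2$.

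Two small remarks. First, in the actual degree $-1$ computation the vanishing runs upward from the bottom rows and is closed off by the $(1\ \lambda)^T$ square; it does not start at the top row as you describe. Second, your plan to check $\Hom_{kQ^\op}(L_\lambda,L_\mu)^{<0}=0$ also for $\lambda\ne\mu$ is exactly what \cref{lem:trivial} needs. The paper writes out only $\lambda=\mu$, but $\mu$ never enters the relevant squares, so the computation is identical.
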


\begin{proof}
  One checks directly that $\Hom^\ZZ_{kQ^\op}(L_\lambda,L_\mu)=\delta_{\lambda\mu}k$.
  We show $\Hom_{kQ^\op}(L_\lambda,L_\lambda)^{<0}=0$.  Let
  $f\in\Hom^\ZZ_{kQ^\op}(L_\lambda,L_\lambda(-1))$.  We have
  $f_{(2,-3)}=f_{(1,-2)}=f_{(3,-2)}=f_{(3,0)}=0$.  Thus
  $f_{(1,-1)}=f_{(2,-1)}$ has the form $\begin{pmatrix}a&b\\0&0\end{pmatrix}$
  and $f_{(1,0)}=f_{(2,0)}$ has the form $\begin{pmatrix}0\\c\end{pmatrix}$.
  From $\begin{pmatrix}1&-1\end{pmatrix}\begin{pmatrix}a&b\\0&0\end{pmatrix}
  =f_{(3,-1)}\begin{pmatrix}1&0\end{pmatrix}$
  we get $b=0$ and $f_{(3,-1)}=a$.  From
  $\begin{pmatrix}a&0\\0&0\end{pmatrix}\begin{pmatrix}1\\\lambda\end{pmatrix}
  =\begin{pmatrix}0\\c\end{pmatrix}$
  we get $a=0=c$.  Hence $f=0$.  One similarly checks
  $\Hom_{kQ^\op}(L_\lambda,L_\lambda)_{<-1}=0$.

  We have the projective resolution $0\to P^\lambda_1\to P^\lambda_0\to L_\lambda\to 0$ with
  \begin{itemize}
    \item $P^\lambda_0=P_1\oplus P_1(1)^2\oplus P_1(2)^2$,
    \item $P^\lambda_1=P_2(1)\oplus P_2(2)^2\oplus P_2(3)
          \oplus P_3\oplus P_3(1)\oplus P_3(2)\oplus P_3(3)$.
  \end{itemize}
  A direct computation gives
  \[
    \Hom_{kQ^\op}(P^\lambda_0,L_\mu)^{<0}
    \;=\; k(1)^6\oplus k(2)^2
    \;=\; \Hom_{kQ^\op}(P^\lambda_1,L_\mu)^{<0}.
  \]
  By \cref{lem:trivial}, $\Ext^1_{kQ^\op}(L_\lambda,L_\mu)^{<0}=0$, and by
  \cref{lem:i.e.} the collection is pre-simple-minded.
\end{proof}

We prove \cref{prop:piece}(b).  It suffices to treat $\gcd(n,m)=1$.

\begin{prop}\label{prop:case-b}
  Let $Q=\begin{tikzcd}
    1\ar[r,yshift=0.7ex]\ar[r,yshift=-0.7ex,"-m"'] & 2 &
    3\ar[l,yshift=0.7ex]\ar[l,yshift=-0.7ex,"-n"]
  \end{tikzcd}$
  with $\gcd(m,n)=1$ and $m\le n$.  Set
  \begin{center}
  $L_\lambda=$
  \begin{tikzcd}[row sep=0.1cm,column sep=2.5cm]
    k\ar[r,"\lambda"]\ar[rddd] & k & k\ar[l]\ar[ldddddd] \\
    \vdots & \vdots & \vdots \\
    k\ar[r] & k & k\ar[l]\ar[ldddddd] \\
    k\ar[r] & k & \\
    \vdots & \vdots & \\
    k\ar[r]\ar[rddd] & k\ar[luuu,leftarrow,shorten >=50pt] & \\
    k\ar[r] & k\ar[luuu,leftarrow,shorten >=50pt] & \\
    \vdots & \vdots & \\
    k\ar[r] & k
  \end{tikzcd}$\in\mod^{[-m-n+1,0]}kQ$.
  \end{center}
  Then $\{L_\lambda\}_{\lambda\in k^*}\subseteq\pvd kQ^\op$ is a pre-simple-minded
  collection.
\end{prop}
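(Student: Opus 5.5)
Following the template of \cref{prop:case-a}, I would prove this by reducing, via \cref{lem:i.e.}, to three graded-module statements about the $L_\lambda\in\mod^{[-m-n+1,0]}kQ$.
\begin{itemize}
  \item[(i)] $\Hom^\ZZ(L_\lambda,L_\mu)=\delta_{\lambda\mu}k$.
  \item[(ii)] $\Hom(L_\lambda,L_\mu)^{<0}=0$.
  \item[(iii)] $\Ext^1(L_\lambda,L_\mu)^{<0}=0$.
\end{itemize}
Via \cref{rmk:equiv}, $L_\lambda$ is a representation of $\wt{Q}^{[-m-n+1,0]}$. This quiver consists of $m+n$ layers, each a copy of the degree-$0$ subquiver $1\to 2\leftarrow 3$. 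The degree $-m$ arrow links vertex $(1,l)$ to $(2,l-m)$, and the degree $-n$ arrow links $(3,l)$ to $(2,l-n)$.

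First I would read off the structure of $L_\lambda$ from the picture. Every vertex space is $k$. At vertex $2$, the $m+n$ one-dimensional spaces are linked alternately by degree-$0$ and shifted arrows. Because $\gcd(m,n)=1$ and the window has length $m+n$, these links form a single ``zigzag string'' that visits every one-dimensional space once. Its two ends are joined by the single arrow labelled $\lambda$ at the top, which closes the string into a band. So $L_\lambda$ is a band module, in the sense of an $\wt{A}$-type (or gentle) covering, with parameter $\lambda$. The combinatorial core is the Euclidean-algorithm style step $l\mapsto l\pm m$, $l\mp n$ that traverses all residues.

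Given the band description, I would prove (i) by the standard argument for homomorphisms between bands with the same word. Any graded map is determined by a scalar propagated along the string. Closing the band forces that scalar to satisfy $\lambda c=\mu c$, so the map is zero when $\lambda\ne\mu$ and a scalar when $\lambda=\mu$. For (ii) I would copy the computation in \cref{prop:case-a}. For $f\colon L_\lambda\to L_\mu(-d)$ with $d>0$, the components at the extreme degrees vanish for support reasons. The commutativity relations then propagate this vanishing along the string, since a shift by $d$ moves the start of the string to a position where it has no partner. The result is $f=0$.

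For (iii) I would write down the minimal projective resolution $0\to P^\lambda_1\to P^\lambda_0\to L_\lambda\to 0$. Here $P^\lambda_0$ is a sum of shifted $P_1$ and $P_3$, one for each top of the band. $P^\lambda_1$ is a sum of shifted $P_2$, together with terms recording the socle relations; a dimension count using $\dim L_\lambda=(m,m+n,n)$-type vectors fixes it. I would then compute the graded dimensions of $\Hom(P^\lambda_0,L_\mu)^{<0}$ and $\Hom(P^\lambda_1,L_\mu)^{<0}$ directly as $\bigoplus_d \dim(e_iL_\mu)^{\text{shift}}$. These should agree, after which \cref{lem:trivial} together with (ii) gives (iii). \textbf{The main obstacle} is this step: producing the resolution and the graded Hom counts uniformly in $m,n$ rather than for a specific pair. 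I would handle it by indexing the layers by $\ZZ/(m+n)$ and using that each vertex-$2$ layer receives exactly one degree-$0$ and one shifted incoming arrow inside the window.
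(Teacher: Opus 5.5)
Your overall architecture is the paper's. You reduce via \cref{lem:i.e.}, get $\Hom^\ZZ(L_\lambda,L_\mu)=\delta_{\lambda\mu}k$ from a single scalar on the connected support, and kill negative-degree maps by propagating a vanishing component around the cycle. The paper seeds this at $f_{(3,-m+1)}=0$, and its chain of implications amounts to walking the vertex-$2$ layers by $+m$ modulo $m+n$, which is where $\gcd(m,n)=1$ enters. You then finish with a resolution and \cref{lem:trivial}.

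The gap is in (iii), and it comes from misreading $L_\lambda$. It is not a band module. Its dimension vector at vertices $(1,2,3)$ is $(m+n,m+n,m)$, whereas your band has only $m+n$ spaces at vertices $1$ and $3$ together. The cycle consists of:
\begin{itemize}
  \item all $m+n$ vertex-$2$ spaces;
  \item the vertex-$1$ spaces in degrees $0,\dots,-(n-1)$;
  \item the vertex-$3$ spaces in degrees $0,\dots,-(m-1)$.
\end{itemize}
In addition, the $m$ vertex-$1$ spaces in degrees $-n,\dots,-(m+n-1)$ hang off the cycle through their degree-$0$ arrow only, since their degree-$(-m)$ arrow leaves the window. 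Your claim that each vertex-$2$ layer receives exactly one degree-$0$ and one shifted arrow is also false. Layers in degrees $0,\dots,-(m-1)$ receive two degree-$0$ arrows, and layers in degrees $-n,\dots,-(m+n-1)$ receive one degree-$0$ and two shifted arrows. Your arguments for (i) and (ii) survive this correction; the plan for (iii) does not.

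The pendant tops are exactly what makes the graded counts agree, so ``these should agree'' is the whole content of (iii), not a formality. For the pure band $B_\lambda$ you describe, you get $P_0=\bigoplus_{l=0}^{n-1}P_1(l)\oplus\bigoplus_{l=0}^{m-1}P_3(l)$ and $P_1=\bigoplus_{l=0}^{m+n-1}P_2(l)$. Writing $x_+=\max(x,0)$, the two Hom spaces in degree $-h$ have dimensions $(n-h)_++(m-h)_+$ and $m+n-h$. At $h=1$ this forces
\[
  \dim\Ext^1(B_\lambda,B_\mu)^{-1}=1+\dim\Hom(B_\lambda,B_\mu)^{-1}\ge 1,
\]
so \cref{lem:i.e.}(2) fails and your argument would break down.

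For the actual module you need
\[
  P^\lambda_0=\bigoplus_{l=0}^{m+n-1}P_1(l)\oplus\bigoplus_{l=0}^{m-1}P_3(l),
  \qquad
  P^\lambda_1=\bigoplus_{l=0}^{m+n-1}P_2(l)\oplus\bigoplus_{l=n}^{2m+n-1}P_2(l).
\]
The summands $P_2(l)$ with $l\ge m+n$ contribute only in positive degrees, and the paper's displayed $P^\lambda_1$ omits them. Both Hom spaces then have dimension $(m+n-h)+(m-h)_+$ in degree $-h$. The uniform bookkeeping you were looking for comes from splitting the depths $[0,m+n-1]$ into the ranges $[0,m-1]$, $[m,n-1]$, $[n,m+n-1]$ and counting in each, not from indexing layers by $\ZZ/(m+n)$.
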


\begin{proof}
  One checks directly that $\Hom^\ZZ_{kQ^\op}(L_\lambda,L_\mu)=\delta_{\lambda\mu}k$.
  We show $\Hom_{kQ^\op}(L_\lambda,L_\lambda)^{<0}=0$.  Let
  $f\in\Hom^\ZZ_{kQ^\op}(L_\lambda,L_\lambda(-h))$ for $h>0$.  One verifies:
  \begin{itemize}
    \item $f_{(3,-l)}=0\;\To\; f_{(2,-l)}=0$ for $0\le l<m$;
    \item $f_{(2,-l)}=0\;\To\; f_{(1,-l)}=0$ for $0\le l<m+n$;
    \item $f_{(1,-l)}=0\;\To\; f_{(2,-l-m)}=0$ for $0\le l<n$;
    \item $f_{(2,-l)}=0\;\To\; f_{(3,-l+n)}=0$ for $n\le l<m+n$.
  \end{itemize}
  Since $f_{(3,-m+1)}=0$, these implications force $f=0$.  The projective
  resolution $0\to P^\lambda_1\to P^\lambda_0\to L_\lambda\to 0$ has
  \begin{itemize}
    \item $P^\lambda_0=\bigoplus_{l=0}^{m+n-1}P_1(l)\oplus\bigoplus_{l=0}^{m-1}P_3(l)$,
    \item $P^\lambda_1=\bigoplus_{l=0}^{m+n-1}P_2(l)\oplus\bigoplus_{l=n}^{m+n-1}P_2(l)$.
  \end{itemize}
  A direct computation gives
  \begin{align*}
    \Hom_{kQ^\op}(P^\lambda_0,L_\mu)^{<0}
    =\bigoplus_{l=1}^{m+n-1}k(l)^{m+n-l}\oplus\bigoplus_{l=1}^{m-1}k(l)^{m-l}
    =\Hom_{kQ^\op}(P^\lambda_1,L_\mu)^{<0}.
  \end{align*}
  \Cref{lem:trivial,lem:i.e.} then give the result.
\end{proof}

\begin{dfn}
  For a sequence of integers $a_0,a_1,\ldots,a_k$, we write
  $\K_{a_0,a_1,\ldots,a_k}$ for the $(k+1)$-Kronecker algebra whose arrows
  carry degrees $a_0,a_1,\ldots,a_k$.
\end{dfn}

We prove \cref{prop:piece}(c) first for 3-arrow Kronecker quivers.

\begin{prop}\label{prop:3-Kro}
  Let $0<m\le n$ with $\gcd(m,n)=1$.  Set
  \begin{center}
  $L_\lambda=$
  \begin{tikzcd}[row sep=0.2cm,column sep=3.0cm]
    k\ar[r]\ar[rddd]\ar[rdddddddd] & k \\
    \vdots & \vdots \\
    k\ar[r]\ar[rddd,no head,shorten >=20pt]\ar[rdddddddd] & k \\
    k\ar[r]\ar[rddd,no head,shorten >=20pt] & k \\
    \vdots\ar[rddd,shorten <=20pt] & \vdots \\
    \vdots\ar[rddd,shorten <=20pt] & \vdots \\
    \vdots & \vdots \\
    k\ar[r]\ar[rddd,"\lambda"'] & k \\
    & k \\
    & \vdots \\
    & k
  \end{tikzcd}$\in\mod^{[-m-n+1,0]}k\K_{0,-m,-n}$.
  \end{center}
  Then $\{L_\lambda\}_{\lambda\in k^*}\subseteq\pvd k\K_{0,-m,-n}^\op$ is a
  pre-simple-minded collection.
\end{prop}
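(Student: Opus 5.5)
The plan follows the same template as \cref{prop:case-a,prop:case-b}, and reduces everything to \cref{lem:i.e.}. Write the Kronecker quiver as $1\rightrightarrows 2$ with three arrows of degrees $0,-m,-n$. In the picture of $L_\lambda$ the left column consists of the spaces $(L_\lambda)_{(1,-l)}=k$ for $0\le l\le n-1$. The right column consists of $(L_\lambda)_{(2,-l)}=k$ for $0\le l\le m+n-1$. Thus $\dim L_\lambda=(n,m+n)$, the degree-$0$ arrow acts as identities $(1,-l)\to(2,-l)$, the degree-$-m$ arrow acts as identities $(1,-l)\to(2,-l-m)$, and the degree-$-n$ arrow acts as identities $(1,-l)\to(2,-l-n)$, except that the last one, $(1,-(n-1))\to(2,-(m+n-1))$, is multiplication by $\lambda$. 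By \cref{lem:i.e.}, it suffices to show three things:
\begin{itemize}
\item[(i)] $\Hom^\ZZ(L_\lambda,L_\mu)=\delta_{\lambda\mu}k$ together with $\Ext^1(L_\lambda,L_\mu)^{-1}=0$;
\item[(ii)] $\Hom(L_\lambda,L_\mu)^{<0}=0$;
\item[(iii)] $\Ext^1(L_\lambda,L_\mu)^{<0}=0$.
\end{itemize}
Item (iii) will be deduced from (ii) via \cref{lem:trivial}.

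For degree $0$ I will argue as follows. A graded morphism $f\colon L_\lambda\to L_\mu$ is a family of scalars $f_{(i,-l)}$. The degree-$0$ arrow forces $f_{(1,-l)}=f_{(2,-l)}$. The degree-$-m$ and degree-$-n$ arrows force $f_{(1,-l)}=f_{(2,-l-m)}=f_{(2,-l-n)}$, with a factor $\lambda$ versus $\mu$ at the last step. Since $\gcd(m,n)=1$, the translations by $m$ and $n$ connect all indices $0,\dots,m+n-1$, so all the scalars are equal to a common $c$. The twisted square then gives $c\lambda=c\mu$, so $f=0$ if $\lambda\ne\mu$ and $f\in k$ if $\lambda=\mu$. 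This coprimality argument is the heart of the construction; it is the reason the "string" closes up into a band with parameter $\lambda$.

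For negative degrees, take $f\in\Hom^\ZZ(L_\lambda,L_\mu(-h))$ with $h>0$. I will propagate vanishing as in \cref{prop:case-b}, using these implications:
\begin{itemize}
\item $f_{(1,-l)}$ determines $f_{(2,-l)}$, $f_{(2,-l-m)}$ and $f_{(2,-l-n)}$ through the commutativity relations;
\item the boundary components vanish because $L_\mu(-h)$ has no support there;
\item $f_{(2,-l)}=0$ forces $f_{(1,-l)}=0$ for $l<n$, because the degree-$0$ arrow of $L_\mu$ is injective.
\end{itemize}
Chasing along the same coprime walk through $\{0,\dots,m+n-1\}$, starting from a component pushed outside the support by the shift, should give $f=0$. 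I expect this chase, organized as a finite list of implications in the style of \cref{prop:case-b}, to be routine but fiddly.

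For the $\Ext$ part, the presentation of $L_\lambda$ should be
\[
0\to\bigoplus_{l=0}^{n-1}\bigl(P_2(l+m)\oplus P_2(l+n)\bigr)\oplus P_2(?)\ldots\to\bigoplus_{l=0}^{n-1}P_1(l)\to L_\lambda\to0.
\]
More precisely, the generators are the $n$ top vectors at vertex $1$. A dimension count then shows the kernel is generated in vertex $2$ by $3n-(m+n)=2n-m$ copies of $P_2$, in the appropriate shifts. I will compute $\Hom(P_0,L_\mu)^{<0}$ and $\Hom(P_1,L_\mu)^{<0}$ as graded vector spaces, which is just a count of dimensions of $L_\mu$ in shifted degrees at vertices $1$ and $2$, and verify that they agree. \Cref{lem:trivial} then yields (iii).

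The degree $-1$ part of $\Ext^1$ needed in (i) follows from an Euler-form computation in degree $-1$. It uses the same count together with $\Hom(L_\lambda,L_\mu)^{-1}=0$. I expect the main obstacle to be matching the graded dimension counts exactly, and in particular pinning down the degree shifts in $P^\lambda_1$. If the counts do not match, I would instead compute $\Ext^1$ directly via the Euler form graded degree by degree.
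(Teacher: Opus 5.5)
Your plan is the paper's: degree-$0$ Hom via the cycle closed up by $\gcd(m,n)=1$, negative-degree Hom by a vanishing chase, then \cref{lem:trivial}. The degree-$0$ step is fine. The gap is the negative-degree chase: as you set it up, it does not close. Write $x_l=f_{(1,-l)}$ and $y_j=f_{(2,-j)}$. Your implications are $x_l=0\Rightarrow y_l=y_{l+m}=y_{l+n}=0$, and $y_l=0\Rightarrow x_l=0$ for $l<n$ via the degree-$0$ arrow. None of them ever decreases the index. The components killed by the shift all have large index: $x_l$ for $l\ge n-h$ and $y_j$ for $j\ge m+n-h$. So for $0<h<n$ your list can never force $x_0=0$ or $y_0=0$. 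Already for $m=1$, $n=2$, $h=1$ it leaves $x_0=y_0$ unconstrained.

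The missing step is the backward implication along the degree-$(-m)$ arrow: $y_{l+m}=0\Rightarrow x_l=0$ for $0\le l\le n-1$. The relation is $y_{l+m}\phi^\lambda=\phi^\mu x_l$. The degree-$(-m)$ arrow of $L_\mu$ at $(1,-l-h)$ is a nonzero scalar whenever $(1,-l-h)$ lies in the support, and otherwise $x_l=0$ anyway. The chase must also be oriented, unlike your symmetric degree-$0$ walk. The analogous backward step along the degree-$(-n)$ arrow is not available. For $l+h\ge m$, that arrow of $L_\mu$ at $(1,-l-h)$ leaves the support, so the relation only says $y_{l+n}=0$ and gives nothing about $x_l$.

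With the fix the argument closes as follows:
\begin{itemize}
\item if $j\ge m$, then $y_j=0$ gives $y_{j-m}=0$ (back along $-m$, then the degree-$0$ arrow);
\item if $j<m$, then $y_j=0$ gives $y_{j+n}=0$ (degree-$0$ injectivity, then forward along $-n$).
\end{itemize}
So vanishing propagates along $j\mapsto j-m \bmod (m+n)$. Starting from $y_{m+n-1}=0$, coprimality of $m$ and $m+n$ forces every $y_j=0$, hence every $x_l=0$, uniformly in $\lambda,\mu$. These are exactly the three implications used in the paper.

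Two smaller corrections.
\begin{itemize}
\item The parameter $\lambda$ sits on the degree-$(-m)$ arrow $(1,-(n-1))\to(2,-(m+n-1))$, not on a degree-$(-n)$ arrow.
\item Your displayed syzygy is wrong. At vertex $2$, $\bigoplus_{l=0}^{n-1}P_1(l)$ occupies the degrees $-[0,n-1]$, $-[m,m+n-1]$, $-[n,2n-1]$. Removing the degrees $-[0,m+n-1]$ of $L_\lambda$ leaves $-[m,2n-1]$. So the resolution is $0\to\bigoplus_{l=m}^{2n-1}P_2(l)\to\bigoplus_{l=0}^{n-1}P_1(l)\to L_\lambda\to 0$.
\end{itemize}
Both Hom spaces into $L_\mu$ then have dimension $n-j$ in degree $-j$ for $1\le j\le n-1$, and vanish in lower degrees. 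Hence \cref{lem:trivial} applies directly, with no Euler-form fallback. It also covers the required vanishing of $\Ext^1(L_\lambda,L_\mu)^{-1}$.
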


\begin{proof}
  One checks directly that $\Hom^\ZZ_{k\K_{0,-m,-n}^\op}(L_\lambda,L_\mu)=\delta_{\lambda\mu}k$.
  We show $\Hom_{k\K_{0,-m,-n}^\op}(L_\lambda,L_\lambda)^{<0}=0$.  Let
  $f\in\Hom^\ZZ_{k\K_{0,-m,-n}^\op}(L_\lambda,L_\lambda(-h))$ for $h>0$.
  One verifies:
  \begin{itemize}
    \item $f_{(2,-l)}=0\;\To\; f_{(1,-l+m)}=0$ for $m\le m+n$;
    \item $f_{(1,-l)}=0\;\To\; f_{(2,-l)}=0$ for $0\le l<n$;
    \item $f_{(1,-l)}=0\;\To\; f_{(2,-l-n)}=0$ for $0\le l<m$.
  \end{itemize}
  Since $f_{(2,-m-n+1)}=0$, we get $f=0$.

  We have a projective resolution
  \[
    0\;\to\;\bigoplus_{l=m}^{2n-1}P_2(l)
    \;\to\;\bigoplus_{l=0}^{n-1}P_1(l)
    \;\to\; L_\lambda\;\to\; 0.
  \]
  A direct computation gives
  \[
    \Hom_{k\K_{0,-m,-n}^\op}\!\left(\bigoplus_{l=0}^{n-1}P_1(l),L_\mu\right)^{<0}
    =\bigoplus_{l=1}^{n-1}k(l)^{m-l}
    =\Hom_{k\K_{0,-m,-n}^\op}\!\left(\bigoplus_{l=m}^{2n-1}P_2(l),L_\mu\right)^{<0}.
  \]
  \Cref{lem:trivial,lem:i.e.} give the result.
\end{proof}

\begin{lem}\label{lem:special}
  Let $k\ge 2$ and $0<a_1\le a_2\le\cdots\le a_k$.  If $a_1+a_2\ge a_3$,
  then there is a pre-simple-minded collection
  $\{L_\lambda\}_{\lambda\in k^*}\subseteq\pvd k\K_{0,-a_1,-a_2,\ldots,-a_k}^\op$.
\end{lem}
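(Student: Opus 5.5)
Here $k$ is used both for the field and for the number of non-zero-degree arrows; below the number of arrows is written $r$. So the claim is about $\K_{0,-a_1,\dots,-a_r}$ with $r\ge 2$, arrows $\beta_0,\beta_1,\dots,\beta_r$ of degrees $0,-a_1,\dots,-a_r$, and the hypothesis $a_1+a_2\ge a_3$ (which is vacuous when $r=2$).

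The plan is to reuse the module $L_\lambda$ built for \cref{prop:3-Kro}. Set $m=a_1$, $n=a_2$, and first reduce to $\gcd(m,n)=1$ as in the earlier propositions (if necessary by passing to a subset of the arrows, or by a suitable regrading). Let $L_\lambda\in\mod^{[-m-n+1,0]}k\K_{0,-m,-n}^\op$ be the module of \cref{prop:3-Kro}. Regard it as a module over the larger algebra by letting each extra arrow $\beta_j$ with $j\ge 3$ act by zero.

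The hypothesis enters through the grading window. $L_\lambda$ lives in degrees $[-m-n+1,0]$. An arrow of degree $-a_j$ with $a_j\ge m+n$ sends every homogeneous component outside this window, so it necessarily acts by zero on $L_\lambda$ and on every graded shift of it appearing in a negative-degree Hom. In the boundary range $a_j\le m+n$ we instead choose the action of $\beta_j$ to be zero. This is compatible with the module structure because $k\K$ has no relations.

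Next, $\Hom_{k\K^\op}(L_\lambda,L_\mu)$ in each degree is contained in the corresponding Hom over the subalgebra $k\K_{0,-m,-n}^\op$. Indeed, a map commuting with all arrows in particular commutes with $\beta_0,\beta_1,\beta_2$. Hence the semibrick property and the vanishing $\Hom(L_\lambda,L_\mu)^{<0}=0$ are inherited directly from \cref{prop:3-Kro}.

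The real work is $\Ext^1(L_\lambda,L_\mu)^{<0}=0$, and here \cref{lem:trivial} is the tool. The minimal projective resolution over the bigger quiver has the same $P^\lambda_0=\bigoplus_{l=0}^{n-1}P_1(l)$. The term $P^\lambda_1$, however, acquires extra summands $P_2(l+a_j)$, $0\le l\le n-1$, for each $j\ge 3$, coming from the zero action of $\beta_j$ on the generators.

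For these summands we compute $\Hom(P_2(l+a_j),L_\mu)^{<0}$, which is the part of $(L_\mu)_2$ in degrees below $-(l+a_j)$. The inequality $a_j\ge a_3$ must be used to show this contribution is absorbed, and this is where I am least sure the bookkeeping closes. One option is that the condition $a_3\le a_1+a_2$ makes these pieces fall in a specific range. A more robust option is to bypass \cref{lem:trivial} and compute Ext directly by the Euler form: compare $\dim\Ext^1(L_\lambda,L_\mu(t))$ over $k\K$ with that over $k\K_{0,-m,-n}$. The difference is $\sum_{j\ge 3}\dim\Hom_k\bigl((L_\lambda)_1,(L_\mu)_2\bigr)$ in the appropriate shifted degree $t+a_j$.

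Using the explicit dimension vector of $L_\lambda$ (each graded piece of $(L_\lambda)_1$ and $(L_\mu)_2$ is $k$ in a consecutive range), we must check that for every $t<0$ these extra terms vanish, or else are cancelled by the Hom terms. If extra Ext genuinely appears, I would modify $L_\lambda$ so that $\beta_j$ acts nontrivially, for instance as a copy of the $\beta_2$-pattern shifted by $a_j-a_2$. The hypothesis $a_j\le a_1+a_2$ guarantees that the shift stays inside the window, and I would then redo the Hom-vanishing chain of implications from \cref{prop:3-Kro} with one extra implication per arrow.

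This Ext step, namely choosing the action of the extra arrows and verifying the graded dimension count, is the main obstacle. Once it is done, \cref{lem:i.e.} concludes the proof.
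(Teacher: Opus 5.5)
Your setup follows the paper's proof up to the one step that needs an argument, and that step is left open. Like the paper, you take $L_\lambda$ from \cref{prop:3-Kro} with $(m,n)=(a_1,a_2)$ and let the extra arrows act by zero, so the graded Hom spaces and hence $\Hom(L_\lambda,L_\mu)^{<0}=0$ are unchanged. You then apply \cref{lem:trivial} to a resolution with $P_0^\lambda=\bigoplus_{l=0}^{a_2-1}P_1(l)$ and with $P_1^\lambda$ enlarged by the summands $P_2(l+a_j)$, $0\le l\le a_2-1$, $j\ge 3$. The missing observation is short. $\Hom(P_2(s),L_\mu)^t$ is the degree-$(t-s)$ component of $L_\mu$ at vertex $2$, and $L_\mu$ is concentrated in degrees $(-a_1-a_2,0]$. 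So $\Hom(P_2(s),L_\mu)^{<0}=0$ as soon as $s\ge a_1+a_2-1$. If every extra summand lies in $\add P_2(\ge a_1+a_2)$, as the paper's proof asserts, the graded dimensions of $\Hom(P_0^\lambda,L_\mu)^{<0}$ and $\Hom(P_1^\lambda,L_\mu)^{<0}$ agree exactly as in \cref{prop:3-Kro}. Then \cref{lem:trivial,lem:i.e.} finish the proof.

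This requires the extra arrows to be long. Taking $l=0$, it needs $a_j\ge a_1+a_2$, i.e.\ $a_3\ge a_1+a_2$. That is the inequality the paper actually uses, both here and where it invokes the lemma in \cref{prop:nKro}, where it checks $(a_k-a_{k-1})+(a_k-a_{k-2})\le a_k-a_{k-3}$. The printed hypothesis $a_1+a_2\ge a_3$ has the inequality reversed.

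You took the printed direction at face value and claimed that $a_j\le a_1+a_2$ ``guarantees that the shift stays inside the window''. But an extra arrow whose shift stays inside the window is exactly what breaks the argument. Take $(a_1,a_2,a_3)=(2,3,3)$. The summand $P_2(3)$ contributes the degree-$(-4)$ component of $L_\mu$ at vertex $2$, which is $k$, to $\Hom(P_1^\lambda,L_\mu)^{-1}$, while the old counts in degree $-1$ still agree. Since $\Hom(L_\lambda,L_\mu)^{-1}=0$, the exact sequence $0\to\Hom(L_\lambda,L_\mu)^{-1}\to\Hom(P_0^\lambda,L_\mu)^{-1}\to\Hom(P_1^\lambda,L_\mu)^{-1}\to\Ext^1(L_\lambda,L_\mu)^{-1}\to 0$ forces $\Ext^1(L_\lambda,L_\mu)^{-1}\ne 0$. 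So the family is not even a semibrick in $\pvd$.

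Your Euler-form alternative would have exposed this if you had carried it out. The fallback of letting $\beta_j$ act non-trivially is unverified. As written, the proposal does not prove the lemma: the decisive Ext step is missing, and in the regime you set up, it fails for this $L_\lambda$.

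Separately, your reduction to $\gcd(a_1,a_2)=1$ ``by passing to a subset of the arrows or regrading'' does not work as stated. Dropping arrows changes Ext, and regrading needs all $a_j$ to share the common factor. The paper's proof relies on the same implicit restriction, though, so this is not the main issue.
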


\begin{proof}
  Let $L_\lambda\in\mod^{(-a_1-a_2,0]}k\K_{0,-a_1,-a_2}^\op
  \subseteq\mod^{(-a_1-a_2,0]}k\K_{0,-a_1,-a_2,\ldots,-a_k}^\op$
  be the module from \cref{prop:3-Kro}.  By assumption, the projective
  resolution of $L_\lambda$ takes the form
  \[
    0\;\to\;\bigoplus_{l=a_1}^{2a_2-1}P_2(l)\oplus P
    \;\to\;\bigoplus_{l=0}^{a_2-1}P_1(l)
    \;\to\; L_\lambda\;\to\; 0,
  \]
  where $P\in\add P_2(\ge a_1+a_2)$.  Since
  \[
    \Hom_{k\K_{0,-a_1,\ldots,-a_k}^\op}(P(\ge a_1+a_2),L_\mu)^{<0}
    =\Hom_{kQ^\op}(P_2,L_\mu)_{\le -a_1-a_2}=0,
  \]
  \cref{lem:trivial,lem:i.e.} give the result.
\end{proof}

\begin{prop}\label{prop:nKro}
  Let $k\ge 2$ and $0<a_1\le a_2\le\cdots\le a_k$.  Then there is a
  pre-simple-minded collection
  $\{L_\lambda\}_{\lambda\in k^*}\subseteq\pvd k\K_{0,-a_1,-a_2,\ldots,-a_k}^\op$.
\end{prop}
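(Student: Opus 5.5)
Since $a_1+a_2\ge a_3$ already gives the claim by \cref{lem:special}, the plan is to reduce the general case to that one. The idea is to shrink the large degrees $a_3,\ldots,a_k$ by derived equivalences and the reduction operations of \cref{subsec:reduction}, and then to check that the statement survives each step. For $k=2$ the hypothesis of \cref{lem:special} is vacuous (or \cref{prop:3-Kro} applies directly after dividing by $\gcd(a_1,a_2)$), so we may assume $k\ge 3$ and $a_3>a_1+a_2$.

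A naive approach would be to take the module $L_\lambda$ of \cref{prop:3-Kro} for the arrows of degrees $0,-a_1,-a_2$ and simply regard it as a $\K_{0,-a_1,\ldots,-a_k}^\op$-module. Since $L_\lambda$ lives in degrees $(-a_1-a_2,0]$, every arrow of degree $-a_j$ with $j\ge 3$ acts by zero. The semibrick property and the vanishing of $\Hom^{<0}$ are then inherited, because every homomorphism must commute with these zero actions. So the only issue is $\Ext^1$, that is, the extra summand $P$ in the projective resolution. Here $P$ consists of copies of $P_2(l)$ with shifts $l\in[a_3,a_3+a_2-1]$ coming from the new arrows, and $\Hom(P_2(l),L_\mu)^{<0}$ can be nonzero only when $l<a_1+a_2$. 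When $a_3\ge a_1+a_2$ these terms vanish, which is exactly the computation in \cref{lem:special}, and the inequality in its hypothesis appears to point the wrong way.

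So the first thing I would do is re-examine that computation. I would recompute $P_1$ in the resolution: $L_\lambda$ is generated at vertex $1$ by $\bigoplus_{l=0}^{a_2-1}P_1(l)$, and each new arrow of degree $-a_j$ contributes $P_2(l+a_j)$ to the kernel. The condition I actually need is $\Hom(P_2(l+a_j),L_\mu)^{<0}=0$ for all $l\in[0,a_2-1]$. This holds as soon as $a_j\ge a_1+a_2-1$ or so, and in particular when $a_j$ is large. If that is right, the statement follows by splitting the arrows into two groups. Large degrees $a_j\ge a_1+a_2$ are harmless by this argument. Small degrees $a_j<a_1+a_2$ require us to replace the pair $(a_1,a_2)$, for instance by the two smallest degrees after adjusting.

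Concretely, I would argue by induction on $\sum_j a_j$, or on $k$. Take the minimal degrees $a_1\le a_2$. If every other $a_j\ge a_1+a_2$, use the \cref{prop:3-Kro} module for $(a_1,a_2)$, first dividing by $g=\gcd(a_1,a_2)$ via the regrading trick that \cref{prop:case-b} and \cref{prop:3-Kro} implicitly use, and kill the remaining $\Ext^1$ terms by degree as above. Otherwise some $a_3<a_1+a_2$ exists, and then the triple $(a_1,a_2,a_3)$ is "balanced" in the sense that $a_1+a_2\ge a_3$, so \cref{lem:special} applies after reordering.

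The main obstacle is the precise bookkeeping of which shifted projectives $P_2(l)$ appear in the resolution, and checking that the graded dimension count $\Hom(P_0,L_\mu)^{<0}\simeq\Hom(P_1,L_\mu)^{<0}$ required by \cref{lem:trivial} still holds once the extra arrows are added. This count also has to hold when $\gcd(a_1,a_2)\ne 1$. In that case I would construct $L_\lambda$ on the regraded quiver with degrees divided by $g$, pull it back along the degree-scaling functor, and verify that the Hom spaces only live in degrees divisible by $g$.
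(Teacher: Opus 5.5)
Your first branch is sound. You are also right that the hypothesis of \cref{lem:special} has to read $a_1+a_2\le a_3$. The extra kernel summands are $P_2(l+a_j)$ with $0\le l<a_2$, and they are invisible to $\Hom(-,L_\mu)^{<0}$ only when $a_j$ is at least about $a_1+a_2$. This is also how the paper applies the lemma (``since $(a_k-a_{k-1})+(a_k-a_{k-2})\le a_k-a_{k-3}$'').

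The genuine gap is the other branch. When some $a_3<a_1+a_2$, you invoke \cref{lem:special} with $a_1+a_2\ge a_3$. That is exactly the misprinted direction you had just shown the computation does not support. No reordering puts you back in the lemma's situation: making a middle arrow the degree-$0$ one creates positive-degree arrows, and their kernel summands $P_2(-q)$ do contribute to $\Hom(-,L_\mu)^{<0}$. Concretely, take $\K_{0,-3,-4,-5}$. The module of \cref{prop:3-Kro} for $(m,n)=(3,4)$ lives in degrees $(-7,0]$, and the arrow of degree $-5$ lands in its support. Letting that arrow act by zero adds $P_2(5)$ to the resolution, with $\Hom(P_2(5),L_\mu)^{-1}=(L_\mu)_2^{-6}\neq0$. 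So $\Ext^1(L_\lambda,L_\mu)^{-1}\neq0$ and the semibrick condition fails. Your suggested induction on $\sum_j a_j$ has no inductive step. The operations of \cref{subsec:reduction} do nothing useful for a two-vertex quiver whose arrows have distinct degrees.

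The missing idea is Koszul duality. The paper uses $(-)^!=\RHom(-,S_1\oplus S_2)$, which gives $\pvd k\K_{0,-a_1,\ldots,-a_k}^\op\simeq(\pvd k\K_{1,a_1+1,\ldots,a_k+1}^\op)^\op$ and exchanges simples and projectives. After regrading one vertex, the dual Kronecker has degrees $0,-(a_k-a_{k-1}),\ldots,-(a_k-a_1),-a_k$. So the sequence of gaps $g_i=a_i-a_{i-1}$ is reversed. Your construction works when $g_3\ge g_1$, so on the dual side it works when $g_{k-2}\ge g_k$, i.e.\ $a_{k-2}-a_{k-3}\ge a_k-a_{k-1}$. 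Then $(-)^!$ carries the resulting collection back. This covers $\K_{0,-3,-4,-5}$, where $g=(3,1,1)$.

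When this fails as well, the paper takes the largest $l\ge2$ with $a_{l-2}-a_{l-3}\ge a_l-a_{l-1}$ (with $a_{-1}=-\infty$, so $l=2$ qualifies). It runs the dual construction on the sub-Kronecker with arrows of degrees $0,-a_1,\ldots,-a_l$. This gives $L_\lambda$ supported at vertex $2$ in a window of width $a_l+a_{l-1}-a_{l-2}$. Maximality of $l$ gives $a_{l+1}>a_l+a_{l-1}-a_{l-2}$. Hence the remaining arrows act by zero and are harmless by exactly your degree argument.
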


\begin{proof}
  \textsc{Case }$a_{k-2}-a_{k-3}\ge a_k-a_{k-1}$:
  By \cite[Lemma~6.1]{K94}, there is an equivalence
  \[
    (-)^!\;:=\;\RHom(-,S_1\oplus S_2)\colon
    \pvd k\K_{0,-a_1,\ldots,-a_k}^\op
    \;\simeq\;
    (\pvd k\K_{1,a_1+1,\ldots,a_k+1}^\op)^\op
  \]
  sending $S_i\mapsto P_i$.  Since
  $(a_k-a_{k-1})+(a_k-a_{k-2})\le a_k-a_{k-3}$,
  \cref{lem:special} produces a pre-simple-minded collection
  $\{L^!_\lambda\}_{\lambda\in k^*}\subseteq\pvd k\K_{1,a_1+1,\ldots,a_k+1}^\op$
  with
  $L^!_\lambda\in\add\Sigma^{(-a_k+a_{k-2},0]}P_1\ast
  \add\Sigma^{(-a_k-a_{k-1}+a_{k-2},0]}P_2$.
  Applying $(-)^!$ yields a pre-simple-minded collection
  $\{L_\lambda\}_{\lambda\in k^*}\subseteq\pvd k\K_{0,-a_1,\ldots,-a_k}^\op$
  with $L_\lambda\in\add\Sigma^{[0,a_k+a_{k-1}-a_{k-2})}S_2\ast
  \add\Sigma^{[0,a_k-a_{k-2})}S_1$.

  \medskip
  \textsc{Case }$a_{k-2}-a_{k-3}<a_k-a_{k-1}$:
  Let $l\ge 2$ be the largest index with $a_{l-2}-a_{l-3}\ge a_l-a_{l-1}$
  (setting $a_{-1}=-\infty$).  Since $a_{l+1}\ge a_l+a_{l-1}-a_{l-2}$,
  the argument above gives a pre-simple-minded collection
  $\{L_\lambda\}_{\lambda\in k^*}\subseteq\pvd k\K_{0,-a_1,\ldots,-a_k}^\op$
  with $L_\lambda\in\add\Sigma^{[0,a_l+a_{l-1}-a_{l-2}-1]}S_2\ast
  \add\Sigma^{[0,a_l-a_{l-2}-1]}S_1$.
\end{proof}

\bibliographystyle{alpha}
\bibliography{my}

\Addresses

\end{document}